\newtheorem{theorem}{Theorem}
\newtheorem{corollary}[theorem]{Corollary}
\newtheorem{definition}[theorem]{Definition}
\newtheorem{lemma}[theorem]{Lemma}
\newtheorem{proposition}[theorem]{Proposition}
\newtheorem{remark}[theorem]{Remark}
\newcommand{\R}{\mathbb{R}}
\newcommand{\Z}{\mathbb{Z}}
\newcommand{\Q}{\mathbb{Q}}
\newcommand{\T}{\mathbb{T}}
\newcommand{\pp}{\mathfrak{p}}
\newcommand{\qq}{\mathfrak{q}}
\newcommand{\Id}{\textup{Id}}
\begin{document}

\title[Local Conjugacy] {Local Conjugacy of Irreducible Hyperbolic Toral Automorphisms}
\author[Bakker]{Lennard F. Bakker}
\address{Department of Mathematics, Brigham Young University, Provo, Utah, USA}
\email{bakker@math.byu.edu}
\author[Martins Rodrigues]{Pedro Martins Rodrigues}
\address{Department of Mathematics, Instituto Superior T\'ecnico, Univ. Tec. Lisboa, Lisboa, Portugal}
\email{pmartins@math.ist.utl.pt}
\author[Moreira]{Miguel M. R. Moreira}
\address{Department of Mathematics, Instituto Superior T\'ecnico, Univ. Tec. Lisboa, Lisboa, Portugal}

\subjclass[2000]{37C15, 37D20, 11S99}
\keywords{Conjugacy, Hyperbolic Toral Automorphisms}
\commby{}

\begin{abstract} This paper is dedicated to the conjugacy problem in $GL(n,\Z)$ and its connection with algebraic number theory. This connection may be summed up in the Latimer-MacDuffee-Taussky Theorem, which, in a very broad sense, identifies the conjugacy relation in $GL(n,\Z)$ with the relation of arithmetic equivalence between certain ideals of algebraic numbers. The main purpouse is to clarify the link between a weaker relation between ideals, weak equivalence, and local conjugacy in $GL(n,\Z)$, i.e., the conjugacy of $GL(n,\Z)$ matrices in the groups $GL(n,\Z_{p})$ of invertible matrices over the $p$-adic integers.
 \end{abstract}

\maketitle

\section{Introduction}

The conjugacy problem in $GL(n,\Z)$ has a classic and well known number theoretic counterpart in the form of the Latimer-MacDuffee-Taussky Theorem (see \cite{LMD33} and \cite{Tau49} for the original statements). We recall here this connection, giving also some of the basic definitions and notations to be used in the sequel.

Fix an irreducible polynomial $f(t) \in \Z[t]$ of degree $n$. In the quotient ring $\Z[t]/(f(t))$, the class of $t$, which we denote from now on by
$\beta$, is a simple root of $f$ and we may identify that ring with $\Z[\beta]$; we denote its field of fractions as $K$, which may also be identified with the isomorphic field $\Q[\beta]$. A matrix $A \in GL_{n}(\Z)$ with characteristic polynomial $f(t)$ acts on $K^{n}$ and so there exists a right eigenvector $u \in K^{n}$ associated to $\beta$: $A u=\beta u$; the entries $u_{i}$ of $u$ determine a $\Z[\beta]$ fractional ideal $I_{A}=\bigoplus_{i} u_{i}\Z$ and $A$ represents multiplication by $\beta$ in $I$, with respect to the basis $(u_{i})$.  We have

\begin{definition}Two fractional ideals $I$ and $J$ are arithmetically equivalent if $J=\alpha I$ for some $\alpha \in K$.
\end{definition}
 
So each $A \in GL_{n}(\Z)$ is in fact associated with a class of ideals. On the other hand, if $A$ and $B$ are conjugated they are associated with the same ideal class, the conjugating matrix $P$ acting as a change of basis of the ideals, taken as $\Z$-modules, thus establishing a bijection between the conjugacy classes of matrices with characteristic polynomial $f(t)$ and
the arithmetic equivalence classes of ideals in $R_{0}$.

This result is in fact true in a more general setting (see, for instance, \cite{Wan08} and \cite{Zil11}): replacing $\Z$ by any PID $D$, given $A \in M_{n}(D)$, we may define $D[\beta]$, its field  of fractions $K$, the associated ideal $I_{A}$, and the relation of arithmetic equivalence of ideals as above; we have then 

\begin{theorem}
Let $D$ be a PID and $A,B\in M_{n}(D)$ be two matrices with the same characteristic irreducible and separable polynomial $f$. Then $A$ and $B$ are similar over $D$, ie, $A P=P B$ for some $P \in GL(n,D)$, if and only if $I_A$ and $I_B$ are arithmetically equivalent over $D[\beta]$. 
\end{theorem}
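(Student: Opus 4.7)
The plan is to identify both similarity of $A,B$ over $D$ and arithmetic equivalence of $I_A, I_B$ with the $D[\beta]$-module isomorphism of two auxiliary modules. By Cayley-Hamilton, $f(A)=0$, so $D[\beta]=D[t]/(f(t))$ acts on $D^n$ via $\beta\cdot v = Av$; write $M_A$ for this module and $M_B$ for the analogous one attached to $B$. A $D$-linear isomorphism $\phi\colon M_B\to M_A$ is just a matrix $P\in GL(n,D)$, and the equivariance $\phi(Bv)=A\phi(v)$ translates exactly to $AP=PB$. Thus the similarity statement is equivalent to $M_A\cong M_B$ as $D[\beta]$-modules.

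The core step is to produce, for each $A$, a canonical $D[\beta]$-module isomorphism $M_A\cong I_A$. The construction is the one indicated in the excerpt: from $Au=\beta u$, the linear map sending the standard basis of $D^n$ to the entries of $u$ (with an appropriate choice of left versus right eigenvector so that the matrix action and multiplication by $\beta$ are correctly intertwined) is a $D[\beta]$-equivariant surjection onto $I_A$. To see that it is injective, I would verify that $u_1,\dots,u_n$ are $F$-linearly independent in $K$, where $F=\operatorname{Frac}(D)$: the induced $F$-linear map $F^n\to K$ is $F[\beta]$-equivariant with nonzero image, hence surjective onto the simple $F[\beta]$-module $K$, and then bijective by dimension count. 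This simultaneously shows that $I_A=\bigoplus_i u_iD$ is a rank-$n$ free $D$-module stable under multiplication by $\beta$, so a fractional ideal of $D[\beta]$, and that $M_A\cong I_A$ as $D[\beta]$-modules.

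With this dictionary, both implications become formal. If $AP=PB$, then $M_A\cong M_B$ and composition with the canonical isomorphisms gives a $D[\beta]$-module isomorphism $\psi\colon I_A\to I_B$. Extending scalars along $D[\beta]\hookrightarrow K$ turns $\psi$ into a $K$-linear automorphism of $K$, which is multiplication by some $\alpha\in K^{\ast}$; unwinding gives $I_B=\alpha I_A$. Conversely, if $I_B=\alpha I_A$, then multiplication by $\alpha$ is a $D[\beta]$-module isomorphism $I_A\to I_B$, inducing $M_A\cong M_B$, and this last isomorphism is encoded by the desired $P$ satisfying $AP=PB$.

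The main obstacle I anticipate is pure bookkeeping: with right eigenvectors $Au=\beta u$ as used in the excerpt, the matrix of multiplication by $\beta$ in the basis $(u_i)$ of $I_A$ is actually $A^{T}$ under the standard column convention, so one must either pass to transposes or use left eigenvectors throughout to get the correspondence $M_A\cong I_A$ exactly right. Once conventions are fixed, the roles of the two standing hypotheses are clear: separability of $f$ forces the $\beta$-eigenspace in $K^n$ to be one-dimensional, so $u$ is unique up to a scalar in $K^{\ast}$ and $I_A$ is well-defined up to arithmetic equivalence; and the PID hypothesis on $D$ ensures that fractional ideals of $D[\beta]$ are free of rank $n$ as $D$-modules, so the isomorphisms $M_A\cong I_A$ can actually be encoded by invertible matrices with entries in $D$ rather than in some larger ring.
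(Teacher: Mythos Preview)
The paper does not supply its own proof of this theorem: it is quoted as the Latimer--MacDuffee--Taussky theorem with references to \cite{LMD33}, \cite{Tau49}, \cite{Wan08}, \cite{Zil11}, and then used as a black box (notably in Corollary~\ref{maincorol1} with $D=\Z_{(p)}$). So there is no in-paper argument to compare against.

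That said, your proposal is a correct proof and is in fact the standard one. The identification ``similarity over $D$'' $\Leftrightarrow$ ``$D[\beta]$-module isomorphism $M_A\cong M_B$'' is exactly right, and your construction of the isomorphism $M_A\cong I_A$ via the eigenvector is the classical Latimer--MacDuffee dictionary. The injectivity argument through the simple $F[\beta]$-module $K$ is clean and uses irreducibility of $f$ over $F$ (which follows from irreducibility over $D$ by Gauss, since a PID is a UFD). For the step ``a $D[\beta]$-linear isomorphism $\psi\colon I_A\to I_B$ is multiplication by some $\alpha\in K^\ast$'', you phrase it via extension of scalars; it is perhaps more transparent to pick any nonzero $a\in I_A$, set $\alpha=\psi(a)/a$, and then check directly that $\psi(x)=\alpha x$ for all $x\in I_A$ using that any $x/a$ lies in $K=\operatorname{Frac}(D[\beta])$ and $\psi$ is $D[\beta]$-linear. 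Your remarks on the transpose bookkeeping and on the roles of separability (one-dimensional $\beta$-eigenspace, hence $I_A$ well-defined up to arithmetic equivalence) and of the PID hypothesis (freeness of $I_A$ over $D$, so the module isomorphism is realised by a matrix in $GL(n,D)$) are accurate and worth keeping.
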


In the case $D=\Z$, the ring $\Z[\beta]$ is a subring of $O_{K}$, the ring of integers of the algebraic number field $K$, and the determination of the arithmetic equivalence classes of ideals is a generalization of the problem of determining the ideal class group of the field, a classic problem about which many questions remain open, even for $n=2$. It is thus desirable to study larger equivalence relations of ideals and the respective relations of matrices. This is the case of weak equivalence of ideals, which may be defined for any integral domain: 

\begin{definition}
Let $R$ be an integral domain. The ideals $I$ and $J$ of $R$ are said to be weakly equivalent if there exist ideals $X$ and $Y$ such that $I=X J$ and $J=Y I$.  
\end{definition}

The next section gives a brief account of some results related with the definition of weak equivalence in the context of $\Z[\beta]$, following the original presentation in \cite{DTZ62}. For the moment it is enough to point out the following important theorem:

\begin{theorem}
Let $R$ be a noetherian domain. Two fractional ideals $I$ and $J$ are weakly equivalent if and only if their localizations $I_{P}$ and $J_{P}$ are arithmetically equivalent for any maximal ideal $P$.
\end{theorem}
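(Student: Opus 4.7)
The plan is to prove both directions by localizing at each maximal ideal $P$, exploiting the fact that in a local Noetherian domain every invertible fractional ideal is principal.

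For the forward implication, suppose $I = XJ$ and $J = YI$ for fractional ideals $X$ and $Y$. Since localization commutes with sums and products of ideals, these identities persist locally: $I_P = X_P J_P$ and $J_P = Y_P I_P$, so $(X_P Y_P) I_P = I_P$. Because $I_P$ is a nonzero, finitely generated, faithful $R_P$-module, a Nakayama/determinant-trick argument applied to a set of generators forces $1 \in X_P Y_P$, i.e., $X_P$ is an invertible fractional $R_P$-ideal with inverse $Y_P$. Writing $1 = \sum_i x_i y_i$ with $x_i \in X_P$ and $y_i \in Y_P$, at least one summand $x_i y_i$ must be a unit of the local ring $R_P$, which forces $X_P = x_i R_P$ to be principal. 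Setting $\alpha := x_i$ then gives $I_P = \alpha J_P$, so $I_P$ and $J_P$ are arithmetically equivalent.

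For the converse, suppose $J_P = \alpha_P I_P$ at every maximal ideal $P$. Define
\[
X := (I : J) = \{r \in K : rJ \subseteq I\}, \qquad Y := (J : I) = \{r \in K : rI \subseteq J\}.
\]
Both are fractional $R$-ideals, since $R$ is Noetherian and $I, J$ admit common denominators. By construction $XJ \subseteq I$ and $YI \subseteq J$. To upgrade these containments to equalities, I would check them locally: colon ideals commute with localization of finitely generated ideals over a Noetherian ring, so $X_P = (I_P : J_P)$, which contains $\alpha_P^{-1}$ by hypothesis; hence $X_P J_P \supseteq \alpha_P^{-1}(\alpha_P I_P) = I_P$, and the reverse inclusion is automatic. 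So $(XJ)_P = I_P$ for every $P$, which forces $XJ = I$, as a containment of fractional ideals that becomes an equality after every localization is itself an equality. The proof that $YI = J$ is symmetric, completing the weak equivalence of $I$ and $J$.

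The central technical obstacle is verifying $1 \in X_P Y_P$ in the forward direction. Because $X_P Y_P$ is only a fractional ideal rather than an integral one, the naive form of Nakayama's lemma does not apply directly. The remedy is the Cayley--Hamilton trick: picking generators $e_1, \ldots, e_n$ of $I_P$ and writing $e_i = \sum_j z_{ij} e_j$ with $z_{ij} \in X_P Y_P$ gives a matrix equation $(I_n - M)\vec{e} = 0$; multiplying by the adjugate yields $\det(I_n - M) \cdot e_j = 0$ in $K$, whence $\det(I_n - M) = 0$. Expanding this determinant produces an identity of the form $1 \in X_P Y_P + (X_P Y_P)^2 + \cdots + (X_P Y_P)^n$, and the locality of $R_P$ is then invoked to descend this to $1 \in X_P Y_P$ itself. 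All remaining manipulations are routine bookkeeping on fractional ideals and their localizations.
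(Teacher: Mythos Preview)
The paper does not actually prove this theorem: it is quoted in the introduction as a known result from \cite{DTZ62} and restated as Proposition~\ref{localarithmetic} with the proof deferred to \cite{Neu99}. So your attempt has to stand on its own.

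Your converse is fine: defining $X=(I:J)$ and $Y=(J:I)$, using that colon ideals of finitely generated ideals commute with localization over a Noetherian ring, and checking $XJ=I$ and $YI=J$ locally is the standard and correct local--global argument.

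The forward direction, however, breaks exactly where you flag the ``central technical obstacle.'' The Cayley--Hamilton trick gives only $1\in A+A^{2}+\cdots+A^{n}$ with $A=X_{P}Y_{P}$, and your assertion that ``the locality of $R_{P}$'' then forces $1\in A$ is false. The summands lie not in $R_{P}$ but in the overring $S=(I_{P}:I_{P})$, which is only semilocal, so the ``sum of non-units is a non-unit'' heuristic does not apply. Concretely, take $R_{P}=k[[t^{2},t^{3}]]$, $I_{P}=J_{P}=k[[t]]$, $X_{P}=(1+t)R_{P}$ and $Y_{P}=R_{P}$: then $X_{P}J_{P}=I_{P}$ and $Y_{P}I_{P}=J_{P}$, yet $1\notin X_{P}Y_{P}=(1+t)R_{P}$ because $(1+t)^{-1}=1-t+t^{2}-\cdots\notin k[[t^{2},t^{3}]]$. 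For the same reason your earlier line ``at least one summand $x_{i}y_{i}$ must be a unit of the local ring $R_{P}$'' is unfounded, since those products need not lie in $R_{P}$ at all. A repair is to work over $S=(I_{P}:I_{P})=(J_{P}:J_{P})$: this is a finite $R_{P}$-algebra, hence semilocal, and over $S$ the linking ideal is invertible (characterization~(2) of weak equivalence), whence principal by Proposition~\ref{semilocal}; its generator is the required $\alpha$.
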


The conjugacy problem in $GL(n,\Z)$ is also significant to dynamical system theory. Any $A \in GL(n,\Z)$ acts as an automorphism on various spaces, among which is the $n$-torus $\R^{n}/\Z^{n}$.  If the characteristic polynomial is irreducible and has no roots of absolute value $1$, the automorphism is hyperbolic and gives the prototypical example of an Anosov diffeomorphism, an important class of discrete dynamical systems with rich dynamical behaviour. Conjugacy of the matrices corresponds to topological conjugacy of the automorphisms, and so it is clearly important to find, and understand from  a dynamical point of view, invariants for this relation.

The set of periodic points of an automorphism - which coincides, in the hyperbolic case, with the rational torus $\Q^{n}/\Z^{n}$ - is a $\Z[\beta]$-module through the action of the automorphism.  The question of knowing to what extent the conjugacy class is determined by this algebraic structure was explored in previous papers (\cite{RS05}, \cite{BR12}). The most important conclusion found was that weak equivalence of the associated ideals of hyperbolic automorphisms $A$ and $B$ implies the existence of a bijection of the rational torus which is an isomorphism of $\Z[\beta]$-modules. 

This isomorphism induces an algebraic and topological isomorphism of the profinite completion $\overline{\Z^{n}}$ of $\Z^{n}$ which is the dual of the rational torus with the discrete topology.  Naturally this isomorphism is a conjugacy of the actions induced by $A$ and $B$ on $\overline{\Z^{n}}$. Because $\overline{\Z^{n}}\simeq \prod_{p} \Z_{p}^{n}$, where $\Z_{p}^{n}$ denotes the free $n$-dimensional module over the $p$-adic integers, the existence of this conjugacy is equivalent to $A$ and $B$ being conjugate over $\Z_{p}$ for all primes $p$, which has a clear bearing with the theorem quoted above.

The main purpose of this paper is to elaborate in this connection, establishing in particular the following

\begin{corollary}
Two matrices $A,B$ are similar over $\Z_{p}$, for all primes $p$, if and only if the ideals $I_{A}$ and $I_{B}$ are weakly equivalent.
\end{corollary}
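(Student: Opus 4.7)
\medskip
\noindent\emph{Proof plan.}
The plan is to combine the three tools already stated in the paper---the Latimer–MacDuffee–Taussky theorem over a general PID, the characterization of weak equivalence by arithmetic equivalence of localizations, and the semi-local decomposition of $\Z_p[\beta]$---viewing arithmetic equivalence of fractional ideals as isomorphism of rank-one modules throughout.

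The first step is to translate similarity over $\Z_p$ into a module statement. Any $A\in M_n(\Z)$ with characteristic polynomial $f$ acts on $\Z_p^{n}$, making it a $\Z_p[\beta]$-module which is canonically identified with $I_A\otimes_{\Z}\Z_p$, so that
\begin{equation*}
A\text{ is similar to }B\text{ over }\Z_p\ \Longleftrightarrow\ I_A\otimes\Z_p\;\cong\;I_B\otimes\Z_p\ \text{as }\Z_p[\beta]\text{-modules}.
\end{equation*}
This is the module-theoretic content of the Latimer–MacDuffee–Taussky correspondence, and it holds even when $f$ becomes reducible over $\Z_p$. Next I would unpack $\Z_p[\beta]=\Z[\beta]\otimes_{\Z}\Z_p$. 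As a complete semi-local $\Z_p$-algebra it decomposes canonically as
\begin{equation*}
\Z_p[\beta]\;\cong\;\prod_{P\mid p}\widehat{\Z[\beta]}_{P},
\end{equation*}
where $P$ runs over the maximal ideals of $\Z[\beta]$ above $(p)$ and each factor is the $P$-adic completion. Under this decomposition $I_A\otimes\Z_p$ becomes the tuple $\bigl(\widehat{(I_A)_{P}}\bigr)_{P\mid p}$, and since an isomorphism of modules over a finite product of rings splits as an isomorphism in each factor, the previous equivalence reduces to the assertion that $\widehat{(I_A)_{P}}\cong\widehat{(I_B)_{P}}$ as $\widehat{\Z[\beta]}_{P}$-modules for every $P\mid p$. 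Invoking the standard fact that, for finitely generated modules over a noetherian local ring, isomorphism is both preserved and reflected by completion, this is in turn equivalent to $(I_A)_{P}\cong(I_B)_{P}$ as $\Z[\beta]_{P}$-modules. Ranging over all primes $p$ exhausts the maximal ideals of $\Z[\beta]$, and the theorem on weak equivalence stated above closes the argument.

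The step I expect to be the main obstacle is the descent from $\widehat{\Z[\beta]}_{P}$ down to $\Z[\beta]_{P}$. An arithmetic equivalence over the completion naturally only produces a scalar in the completed total quotient ring, which is genuinely larger than $K$; moreover, because $\Z[\beta]$ is only an order in $K$ and may fail to be integrally closed, its completion at a singular prime need not even be a domain, so ``arithmetic equivalence'' at the completion level must itself be interpreted via units of the total quotient ring. The cleanest workaround is to carry everything out in the language of isomorphism of finitely generated rank-one modules, where faithfully flat descent together with finite generation yields the required isomorphism automatically, without ever having to exhibit a specific element of $K$.
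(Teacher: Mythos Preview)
Your argument is correct, but it follows a genuinely different route from the paper's. The paper never works directly over $\Z_p$. Instead it first passes from $\Z_p$ to the arithmetic localization $\Z_{(p)}$ via an elementary approximation lemma (Proposition~\ref{equiv}), then applies Latimer--MacDuffee over the PID $\Z_{(p)}$ to obtain arithmetic equivalence of $\Z_{(p)}I_A$ and $\Z_{(p)}I_B$, and finally uses an explicit Chinese Remainder construction (Lemma preceding Theorem~\ref{main} and Theorem~\ref{main} itself) to show that $\Z_{(p)}I=\bigcap_{\pp\supseteq(p)}I_\pp$ and that arithmetic equivalence of $\Z_{(p)}I$ and $\Z_{(p)}J$ is the same as arithmetic equivalence of $I_\pp$ and $J_\pp$ for each $\pp\mid p$. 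Your approach bypasses $\Z_{(p)}$ entirely: you reinterpret similarity over $\Z_p$ as isomorphism of the $\Z_p[\beta]$-modules $I_A\otimes\Z_p$, decompose $\Z_p[\beta]$ as a product of completions $\widehat{\Z[\beta]}_P$, and then descend from the completion to the localization by the general principle that completion of a noetherian local ring reflects isomorphism of finitely generated modules. The paper's method is more elementary and self-contained---everything is done by hand with explicit elements and the CRT---whereas yours is structurally cleaner and more conceptual, at the cost of invoking the nontrivial descent statement you flag at the end. Your observation that the module-isomorphism formulation of Latimer--MacDuffee survives reducibility of $f$ over $\Z_p$ is exactly what makes the direct $\Z_p$ route viable, and it is a point the paper sidesteps by retreating to $\Z_{(p)}$, where $f$ remains irreducible.
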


Although the presentation relies on the language and concepts of algebraic number theory, the results needed are quite modest and stay almost entirely at an elementary level. References \cite{Neu99} or \cite{Mil14} provide ample background on this matter.

\section{Weak Equivalence of Ideals}

This section is devoted to a brief presentation of the relation of weak equivalence of ideals and of some of its properties and relations  with the conjugacy problem in $GL_{n}(\Z)$. A detailed
account of weak equivalence may be found in \cite{DTZ62}.

For two fractional ideals $I$ and $J$ of $\Z[\beta]$, we let $(J:I)$ denote the fractional ideal
\[(J:I)=\{x \in K: xI \subset J\}.\]
The coefficient ring of $I$ is defined to be $(I:I)$. It is an order of $K$, ie, a subring of $O_{K}$, the ring of integers of $K$, of rank $n$ over $\Z$, or, in other words, a subring of finite index of $O_{K}$. An order of $K$ is always a noetherian domain where all non-zero prime ideals are maximal but, with the obvious exception of $O_{K}$, is not integrally closed.

By definition, $\Z[\beta] \subset (I:I)$. The set of orders of $K$ containing $\Z[\beta]$ constitutes a finite lattice under inclusion. 

As two arithmetically equivalent $\Z[\beta]$ ideals have necessarily the same coefficient ring, this lattice provides a first step in the classification of ideals up to arithmetic equivalence and so of $GL_{n}(\Z)$ matrices up to conjugacy:  Suppose that $A \in GL_{n}(\Z)$ represents, as above, multiplication by $\beta$ in $I$, with respect to a basis over $\Z$; then, if $\theta \in K$ is given by a polynomial $\theta=p(\beta)$, then $\theta \in (I:I)$ if and only if $p(A)$ is an integer matrix. 

The orders of $K$ containing $\Z[\beta]$ are the idempotents of the multiplicative semigroup of $\Z[\beta]$ fractional ideals. Each order $R$ has an associated group of invertible ideals:

\begin{definition} A ideal $I$ is $R$-invertible if there exists an ideal $J$ such that $I J=R$.
\end{definition}

The ideal $J$ (the inverse of $I$) is $(R:I)$. It is not difficult to prove that, given an order $R$, an ideal $I$ is invertible for $R$ if and only if $R=(I:I)$ and $(R:(R:I))=I$.

It's well known that all $O_{K}$ ideals are invertible; however, for other orders $R$, there may exist ideals with ring of coefficients $R$ that are not invertible.

We come now to the main definition of this section:

\begin{definition} Two ideals $I$ and $J$ of $\Z[\beta]$ are weakly equivalent if they satisfy the following equivalent conditions:
\begin{enumerate}
 \item [1] There exist ideals $X$ and $Y$ such that $I=X J$ and $J=Y I$.
 \item [2] $(I:I)=(J:J)=R$ and there exists a $R$ invertible ideal $X$ such that $J=I X$.
 \item [3] $1 \in(I:J)(J:I)$.
\end{enumerate}
\end{definition}
 
A few facts are immediate consequences of the definition: arithmetically equivalent ideals are always weakly equivalent, so the weak equivalence relation contains arithmetic equivalence. On the other hand, two invertible but not arithmetically equivalent ideals, in an order $R$, are always weakly equivalent.
Furthermore, an invertible ideal and a non-invertible ideal, both in the same order $R$, are not weakly equivalent.

In \cite{DTZ62} the weak equivalence relation is defined in a slightly different way and in the general context of an abelian semigroup. The authors then prove that in the semigroup of ideals of a noetherian
domain, that relation is equivalent to the one defined above.

\subsection{Localization}

In this section we recall some results about localization will be used to clarify the relations between conjugacy of matrices and weak equivalence of the associated ideals in later sections.

\begin{definition}
Let $R$ be an integral domain and $\pp$ be a prime ideal of $R$. Then the localization of $R$ over $\pp$ is
\[R_\pp=\{a/b| a\in R, b\in R\setminus \pp\}\subseteq K.\]
If $I$ is a fractional ideal of $R$, define $I_\pp=IR_\pp$.
\end{definition}

Localization plays a fundamental role in Algebraic Number Theory. It behaves nicely with respect to the ideal operations and we have $(I+J)_\pp=I_\pp+J_\pp$ and similar properties for multiplication, intersection and quotient of ideals. Localization of ideals essentially eliminates from the ideal the information not respecting $\pp$. However, in some sense, from gathering the information from every prime ideal $\pp$ we can recover the information about the ideal.

\begin{proposition}
Let $I, J$ be fractional ideals of $R$. Then $I=J$ if and only if $I_\pp=J_\pp$ for every maximal ideal $\pp$.
\end{proposition}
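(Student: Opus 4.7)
The forward direction is immediate, since if $I=J$ then obviously $IR_\pp = JR_\pp$ for every $\pp$. So the content is in the converse, for which the plan is to prove $I\subseteq J$ (and conclude the reverse inclusion by symmetry) via a standard denominator-ideal / contradiction argument.

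Fix $x \in I$; the goal is to show $x \in J$. I would introduce the auxiliary ideal
\[ \mathfrak{a} = \{r \in R : rx \in J\} \subseteq R, \]
which is an honest (integral) ideal of $R$ because $J$ is an $R$-module and $R$ absorbs into itself. Observe that showing $x \in J$ is exactly the same as showing $1 \in \mathfrak{a}$, i.e.\ $\mathfrak{a} = R$.

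Suppose, toward a contradiction, that $\mathfrak{a} \neq R$. Then $\mathfrak{a}$ is contained in some maximal ideal $\pp$ of $R$. On the other hand, $x \in I \subseteq I_\pp = J_\pp = JR_\pp$, so I can write $x = y/s$ with $y \in J$ and $s \in R \setminus \pp$. Rearranging gives $sx = y \in J$, which means $s \in \mathfrak{a} \subseteq \pp$, contradicting $s \notin \pp$. Hence $\mathfrak{a} = R$, so $x = 1 \cdot x \in J$, proving $I \subseteq J$. Swapping the roles of $I$ and $J$ gives the reverse inclusion, hence $I = J$.

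There is no real obstacle here; the only subtlety is recognizing that the right object to study is the set of $r \in R$ that drag $x$ into $J$, and that this set is an integral ideal (not merely a fractional one) so that maximality of some containing ideal is available. The rest is a one-line contradiction from the hypothesis $I_\pp = J_\pp$ combined with the definition of localization.
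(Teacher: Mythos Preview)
Your proof is correct and is the standard denominator-ideal argument. The paper does not actually supply its own proof of this proposition; it simply refers the reader to Neukirch's \emph{Algebraic Number Theory} for this and the other results in that subsection, so there is nothing substantive to compare against beyond noting that your argument is essentially the textbook one.
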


For the proof of this and the following results in this section we refer to \cite{Neu99}.

The ring $R_\pp$ is a local ring, that is, it has only one maximal ideal, namely $\pp_\pp=\pp R_\pp$. Local rings have the nice property that their invertible ideals are principal.

\begin{proposition}
Let $S$ be a semi-local ring (that is, a ring with finitely many maximal ideals) and $I$ a fractional ideal of $S$. Then $I$ is invertible in $S$ if and only if it's principal. Therefore in a local ring ideals are arithmetically equivalent if and only if they are weakly equivalent.
\label{semilocal}
\end{proposition}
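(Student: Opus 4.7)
The plan is to first establish the claim for local rings and then bootstrap to the semi-local case via the Chinese Remainder Theorem; the statement about arithmetic versus weak equivalence will then follow quickly from the local case combined with condition 2 of Definition 2.5.

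For the local case, suppose $S$ has unique maximal ideal $\mathfrak{m}$ and $I$ is invertible with inverse $J$, so that $IJ = S$. Writing $1 = \sum_i a_i b_i$ with $a_i \in I$ and $b_i \in J$, at least one summand $a_k b_k$ must be a unit, for otherwise every term would lie in $\mathfrak{m}$ and so would their sum. I would then show $I = a_k S$: any $x \in I$ satisfies $x b_k \in IJ = S$, and the identity $x = a_k \cdot (x b_k)(a_k b_k)^{-1}$ exhibits $x$ as a multiple of $a_k$. The reverse implication (principal implies invertible) is immediate, since $aS$ is inverted by $a^{-1}S$.

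For the semi-local case with maximal ideals $\mathfrak{m}_1, \ldots, \mathfrak{m}_n$, I would use that localization preserves invertibility, so each $I_{\mathfrak{m}_j}$ is principal by the local case. Choose $a_j \in I$ whose image generates $I_{\mathfrak{m}_j}$. Since the $\mathfrak{m}_j$ are pairwise comaximal, CRT provides $c_j \in S$ with $c_j \equiv 1 \pmod{\mathfrak{m}_j}$ and $c_j \equiv 0 \pmod{\mathfrak{m}_i}$ for $i \neq j$. Setting $a = \sum_j c_j a_j$, one checks that $a \equiv a_j \pmod{\mathfrak{m}_j I}$, so $a$ does not lie in $\mathfrak{m}_j I_{\mathfrak{m}_j}$ and therefore generates $I_{\mathfrak{m}_j}$ by Nakayama's lemma (using that $a_j$ is already known to do so). Proposition 2.4 then forces $aS = I$, since both sides share the same localization at every maximal ideal.

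The deduction for arithmetic versus weak equivalence in a local ring is then short: by condition 2 of Definition 2.5, weak equivalence of $I$ and $J$ produces an invertible ideal $X$ with $J = IX$; the first part gives $X = \alpha S$, so $J = \alpha I$, which is arithmetic equivalence. Conversely, $J = \alpha I$ yields the principal (hence invertible) ideals $X = \alpha S$ and $Y = \alpha^{-1} S$ witnessing weak equivalence. The main obstacle is the semi-local step, where one must assemble a single global generator out of local generators; this succeeds precisely because $S$ has only finitely many maximal ideals, which is exactly what CRT requires, and because Nakayama lets one detect generation at each localization by a congruence condition modulo $\mathfrak{m}_j I$.
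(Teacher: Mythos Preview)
The paper does not supply its own proof of this proposition: immediately after the preceding proposition it states that for this and the following results it refers to Neukirch. So there is nothing to compare against beyond the standard textbook argument.

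Your argument is correct and is essentially that standard argument. The local case is exactly the usual trick: from $1=\sum a_ib_i$ one term $a_kb_k$ must be a unit, and then $I=a_kS$. For the semi-local step your patching via CRT is fine; note that the appeal to Nakayama is legitimate because invertible ideals are automatically finitely generated (the $a_i$ from $1=\sum a_ib_i$ generate $I$), and your observation $a\equiv a_j\pmod{\mathfrak m_jI}$ indeed forces $a$ to generate $I_{\mathfrak m_j}$ since $a_j$ does. The local-to-global equality you invoke is what the paper states as the proposition just before this one. The final deduction about arithmetic versus weak equivalence from condition~2 of the weak-equivalence definition is correct as written. Your internal numbering (``Proposition~2.4'', ``Definition~2.5'') does not match the paper's running numeration, but the references are unambiguous.
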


The great interest of this property is that it will show us that weak equivalence is actually a local version of arithmetic equivalence. 

\begin{proposition}
Let $R$ be an order and $I$ an ideal. Then $I$ is invertible in $R$ if and only if $I_\pp$ is principal for every prime ideal $\pp$ of $R$. Therefore $I$ and $J$ are weakly equivalent if and only if $I_\pp$ is arithmetically equivalent to $J_\pp$ for every prime ideal $\pp$ of $R$.
\label{localarithmetic}
\end{proposition}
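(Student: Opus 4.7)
The plan is to reduce both equivalences to the local-global principle for ideal equality stated earlier (ideals of $R$ are equal iff their localizations agree at every maximal ideal), combined with the observation that in an order $R$ every nonzero prime is maximal and each $R_\pp$ is a local noetherian domain. Throughout I would use the standard facts that localization commutes with products and quotients of fractional ideals, so that $(IJ)_\pp = I_\pp J_\pp$ and $(R:I)_\pp = (R_\pp : I_\pp)$; the latter identity requires $I$ to be finitely generated, which holds because $R$ is noetherian.

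For the first equivalence, I would start from the characterization that $I$ is $R$-invertible iff $I(R:I) = R$. In the forward direction, localizing this identity gives $I_\pp(R_\pp : I_\pp) = R_\pp$, so $I_\pp$ is invertible in the local ring $R_\pp$; Proposition \ref{semilocal} then forces $I_\pp$ to be principal. Conversely, if $I_\pp = \alpha_\pp R_\pp$ at every prime $\pp$, then $I_\pp$ is invertible locally with inverse $\alpha_\pp^{-1} R_\pp = (R_\pp : I_\pp)$, so $(I(R:I))_\pp = R_\pp$ at every maximal $\pp$; the local-global principle then yields $I(R:I) = R$.

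For the second equivalence I would use condition (2) in the definition of weak equivalence. If $I$ and $J$ are weakly equivalent, write $J = IX$ with $X$ an $R$-invertible ideal; the first part gives $X_\pp = \alpha_\pp R_\pp$ principal, so $J_\pp = \alpha_\pp I_\pp$ is arithmetically equivalent to $I_\pp$. Conversely, assume $J_\pp = \alpha_\pp I_\pp$ at every $\pp$. Then $(J:I)_\pp = (J_\pp : I_\pp) = \alpha_\pp R_\pp$ is principal, so by the first part $X := (J:I)$ is $R$-invertible; the local-global principle then upgrades the inclusion $XI \subseteq J$ to equality, since $(XI)_\pp = \alpha_\pp I_\pp = J_\pp$ at every $\pp$. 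Moreover $(I:I)_\pp = R_\pp = (J:J)_\pp$ at each $\pp$ forces $(I:I) = R = (J:J)$, completing condition (2).

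The only non-trivial obstacle is the commutativity of the quotient construction $(\cdot : \cdot)$ with localization, i.e., the identity $(J:I)_\pp = (J_\pp : I_\pp)$; this is the one place where finite generation of the ideals, guaranteed by $R$ being noetherian, is essential. Once that identity is in hand, the proof is a transparent application of the local-global principle for equality of ideals together with Proposition \ref{semilocal}'s identification of principal and invertible ideals inside each $R_\pp$.
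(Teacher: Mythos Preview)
The paper does not actually prove this proposition; it simply refers the reader to Neukirch. So there is no ``paper's proof'' to compare against, and your attempt must stand on its own.

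Your argument for the first equivalence is correct, and the forward direction of the second equivalence is fine. The converse of the second equivalence, however, contains an unjustified step. From $J_\pp=\alpha_\pp I_\pp$ you conclude $(J_\pp:I_\pp)=\alpha_\pp R_\pp$, but a direct computation gives only $(J_\pp:I_\pp)=\alpha_\pp\,(I_\pp:I_\pp)$. You then assert $(I:I)_\pp=R_\pp$, which is exactly what would be needed, but nothing in the hypothesis ``$I_\pp$ and $J_\pp$ are arithmetically equivalent'' forces the coefficient ring of $I$ to be $R$; the proposition as stated allows $I$ to be any fractional $R$-ideal, with $(I:I)$ possibly a strictly larger order. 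Consequently $X=(J:I)$ need not be $R$-invertible, and condition~(2) of the definition of weak equivalence is not verified.

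The repair is minor and keeps your overall strategy intact: work with condition~(1) or condition~(3) instead. For instance, set $X=(I:J)$ and $Y=(J:I)$; then $XJ\subseteq I$ and $YI\subseteq J$ always hold, and localizing gives $(XJ)_\pp\supseteq \alpha_\pp^{-1}J_\pp=I_\pp$ and symmetrically for $YI$, so the local--global principle yields $XJ=I$ and $YI=J$. Equivalently, one checks $1\in(I_\pp:J_\pp)(J_\pp:I_\pp)$ for every $\pp$ and concludes $1\in(I:J)(J:I)$ by the same principle. Either route avoids any claim about the coefficient ring and completes the proof along the lines you intended.
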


\section{Local conjugacy in $GL(n,\Z)$ and weak equivalence of ideals}

We consider now conjugacy of matrices $A, B \in GL(n,\Z)$ over local rings and its connection to weak equivalence of ideals. As in the previous section, $\Z_{(p)}$ denotes the localization of $\Z$ at
the prime ideal $(p)$, while $\Z_{p}$ denotes, as usual, the ring of $p$-adic integers. We'll use also the notation $\Z_{/q}$ for the finite ring of congruence classes of integers modulo $q$; the notation
$a \equiv_{q} b$ for the congruence relation will also be used, with the obvious meaning, for vectors $a,b \in \Z^{m}$ or for integer matrices.

We start by establishing the fact that the similarity relations in $M(n,\Z)$ over these different local rings are in fact all the same. 

\begin{lemma} \label{modkernel}
Given $L\in M(m,\Z)$ and a prime $p$, let $D=diag[p^{k_{1}},\cdots, p^{k_{r}},0,\cdots,0]$ with $0\leq k_{1}\leq \cdots \leq k_{r}=\mu$, be the $p$ factor of the Smith Normal Form of $L$. Then, if  $\lambda$ is a nonnegative integer and $x'$ an integer vector such that $Lx' \equiv 0\, \mod p^{\mu+\lambda}$, there exists an integer vector $x$ such that 
 \[Lx=0 \, \mbox{ and }x \equiv x' \, \mod p^{\lambda}.\] 
\end{lemma}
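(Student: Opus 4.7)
The plan is to use the Smith Normal Form to reduce the problem to the case where $L$ is diagonal, then solve componentwise. Write $L=U\Lambda V$ with $U,V\in GL(m,\Z)$ and $\Lambda=\operatorname{diag}[d_{1},\dots,d_{r},0,\dots,0]$, where $d_{1}\mid d_{2}\mid\cdots\mid d_{r}$ are the invariant factors of $L$. By hypothesis, $d_{i}=p^{k_{i}}\,m_{i}$ with $\gcd(m_{i},p)=1$, and since the invariant factors form a divisibility chain, the $p$-adic valuations satisfy $0\le k_{1}\le\cdots\le k_{r}=\mu$.

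Since $U$ and $V$ are invertible over $\Z$, the congruence $Lx'\equiv 0\pmod{p^{\mu+\lambda}}$ is equivalent to $\Lambda(Vx')\equiv 0\pmod{p^{\mu+\lambda}}$, and a relation $x\equiv x'\pmod{p^{\lambda}}$ is equivalent to $Vx\equiv Vx'\pmod{p^{\lambda}}$. So it suffices to prove the statement for $\Lambda$ in place of $L$: given $y'\in\Z^{m}$ with $\Lambda y'\equiv 0\pmod{p^{\mu+\lambda}}$, find $y\in\Z^{m}$ with $\Lambda y=0$ and $y\equiv y'\pmod{p^{\lambda}}$; then $x=V^{-1}y$ will be the desired vector.

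For $i\le r$ the hypothesis yields $p^{k_{i}}m_{i}y'_{i}\equiv 0\pmod{p^{\mu+\lambda}}$, and since $m_{i}$ is a unit modulo $p$ this means $y'_{i}\equiv 0\pmod{p^{\mu+\lambda-k_{i}}}$. The inequality $k_{i}\le\mu$ (which is precisely what makes $\mu$ the correct exponent in the hypothesis) gives $\mu+\lambda-k_{i}\ge\lambda$, so in fact $y'_{i}\equiv 0\pmod{p^{\lambda}}$. Define $y$ by $y_{i}=0$ for $i\le r$ and $y_{i}=y'_{i}$ for $i>r$. Then $\Lambda y=0$ trivially, and $y\equiv y'\pmod{p^{\lambda}}$ by the computation just made (and by equality in the tail).

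There is no real obstacle here; the whole argument rests on recognizing that the SNF reduces the problem to a componentwise statement about integers, and that the divisibility chain forces $k_{i}\le\mu$ for every $i$, which is exactly the slack needed to conclude $y'_{i}\equiv 0\pmod{p^{\lambda}}$ from $\Lambda y'\equiv 0\pmod{p^{\mu+\lambda}}$. The role of $\mu$ (the largest $p$-exponent among the nonzero invariant factors) is precisely to measure the maximum loss of $p$-adic precision caused by multiplication by $L$.
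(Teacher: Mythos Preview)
Your proof is correct and follows essentially the same route as the paper: reduce via Smith Normal Form to a diagonal matrix and argue componentwise that the first $r$ coordinates of the transformed vector are divisible by $p^{\lambda}$, then zero them out. The only cosmetic difference is that the paper works with the $p$-factor decomposition $L=SDT$ where $\det S,\det T$ are merely prime to $p$ (hence needs the adjugate and invertibility mod $p^{\lambda}$), whereas you use the full SNF with $U,V\in GL(m,\Z)$, which makes the change of variables slightly cleaner.
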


\begin{proof} The conditions imply that $L=S D T$ where $S$ and $T$ are integer matrices with determinant prime to $p$. We have
 \[L x=0 \Leftrightarrow D Tx=0 \Leftrightarrow (T x)_{j}=0 \, \forall j\leq r.\]
 On the other hand, \[Tx'=y \mbox{ with } y_{j} \equiv 0\, \mod p^{\lambda}\,\, \forall j\leq r,\] since \[D Tx'=(p^{k_{1}}y_{1},\cdots,p^{k_{r}}y_{r},y_{r+1},\cdots,y_{m})^{t}\]
 must be $0$ modulo $p^{\mu+\lambda}$.\\
 \noindent We choose any $w \in \Z^{m}$ satisfying $w_{j}=0$ for all $j\leq r$ and $ (\mbox{det} T)w_{i}\equiv y_{i}$ for all $r<i\leq m$, and put $x=(\mbox{det} T)T^{-1}w$. Obviously $Lx=0$ and, because 
 $T(x-x')=(\mbox{det} T)w-y \equiv 0 \, \mod p^{\lambda}$ and $T$ is invertible over $\Z_{/p^{\lambda}}$, we have $x \equiv x'\, \mod p^{\lambda}$.\qedhere 
 \end{proof}

 The same reasoning proves also 
 
\begin{lemma} \label{padickernel}
Let $L\in M(m,\Z)$ and suppose that the system $Lx=0$ admits a solution with $x\in \Z_{p}^{m}$. Then the system admits a solution $y\in \Z^m$ such that $x\equiv_p y$.
\end{lemma}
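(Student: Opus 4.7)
The plan is to reduce the $p$-adic solvability hypothesis to an integer congruence hypothesis of the form required by Lemma \ref{modkernel}, and then invoke that lemma.

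First, I would import the same Smith Normal Form set-up used in Lemma \ref{modkernel}: write $L = SDT$ with $S,T$ integer matrices whose determinants are prime to $p$, and with $D = \textup{diag}[p^{k_1},\ldots,p^{k_r},0,\ldots,0]$, $\mu = k_r$. Notice that $\mu$ is finite (it is determined by the integer matrix $L$), so we have a concrete target power of $p$ to aim for.

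Second, I would approximate $x$ by an integer vector. Since $\Z$ is $p$-adically dense in $\Z_p$, there exists $x' \in \Z^m$ with $x' \equiv x \bmod p^{\mu+1}$ componentwise in $\Z_p$. Because $L$ has integer entries, $L(x'-x) \in p^{\mu+1}\Z_p^m$, and combining with the hypothesis $Lx = 0$ gives $Lx' \in p^{\mu+1}\Z_p^m$. The crucial observation is that $Lx'$ is actually an integer vector, and
\[
\Z^m \cap p^{\mu+1}\Z_p^m \;=\; p^{\mu+1}\Z^m,
\]
since the canonical map $\Z/p^{\mu+1}\Z \to \Z_p/p^{\mu+1}\Z_p$ is an isomorphism. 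Consequently $Lx' \equiv 0 \bmod p^{\mu+1}$ in the usual integer sense.

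Third, I would apply Lemma \ref{modkernel} to $x'$ with $\lambda = 1$: there exists $y \in \Z^m$ with $Ly = 0$ and $y \equiv x' \bmod p$. Since by construction $x' \equiv x \bmod p^{\mu+1}$, a fortiori $x' \equiv x \bmod p$, so $y \equiv_p x$, as required.

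The only genuine subtlety is the second step — specifically, the passage from ``$Lx'$ is small in the $p$-adic topology'' to ``$Lx'$ is divisible by a high power of $p$ in $\Z$'' — which relies on the elementary identification of $\Z/p^N\Z$ with $\Z_p/p^N\Z_p$. Everything else is a direct reuse of the machinery of Lemma \ref{modkernel}.
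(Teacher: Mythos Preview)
Your proposal is correct and follows essentially the same approach as the paper, which simply says ``the same reasoning proves also'' Lemma~\ref{padickernel}: both arguments rest on the $p$-factor Smith Normal Form set-up of Lemma~\ref{modkernel}. The only cosmetic difference is that you package the argument as a formal reduction (approximate $x$ by an integer vector $x'$ modulo $p^{\mu+1}$, then invoke Lemma~\ref{modkernel} with $\lambda=1$), whereas the paper presumably intends a direct repetition of the SNF computation; these are the same idea.
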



For the next proposition, given a fixed pair of matrices $A,B\in M(n,\Z)$ and a prime $p$, we take $L \in M(n^{2},\Z)$ to be a matrix representing the linear map from $M(n,\Z)$ to itself defined by $L(X)=A X-X B$, and $\mu$ as in Lemma \ref{modkernel}.
\begin{proposition}
Given two matrices $A,B\in M(n,\Z)$ and a prime $p$, the following are equivalent.
\begin{enumerate}
\item $A$ and $B$ are similar over $\Z_p$.
\item There is a matrix $Q\in M_n(\Z)$ such that $AQ=QB$ and $p\nmid \det Q$.
\item $A$ and $B$ are similar over $\Z_{(p)}$.
\item $A$ and $B$ are similar over $\Z_{/p^{k}}$ for every $k\in \Z^{+}$. 
\item $A$ and $B$ are similar over $\Z_{/p^{\mu+1}}$.
\end{enumerate}
\label{equiv}
\end{proposition}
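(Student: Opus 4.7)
The plan is to establish the five conditions by the cyclic chain $(2) \Rightarrow (3) \Rightarrow (1) \Rightarrow (4) \Rightarrow (5) \Rightarrow (2)$, with all the substance concentrated in the last implication.

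First I would dispatch the routine implications. For $(2) \Rightarrow (3)$, the hypothesis $p \nmid \det Q$ makes $\det Q$ a unit in $\Z_{(p)}$, so $Q \in GL_{n}(\Z_{(p)})$ and the identity $AQ = QB$ gives similarity over $\Z_{(p)}$. For $(3) \Rightarrow (1)$, it suffices to invoke the ring inclusion $\Z_{(p)} \subset \Z_{p}$ (the conjugating matrix and its inverse continue to have $p$-adic entries). For $(1) \Rightarrow (4)$, reduction modulo $p^{k}$ carries an invertible matrix over $\Z_{p}$ to an invertible matrix over $\Z_{/p^{k}}$, since the inverse reduces to the inverse. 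And $(4) \Rightarrow (5)$ is the trivial instance $k = \mu+1$.

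Next I would turn to the substantive step $(5) \Rightarrow (2)$, where Lemma \ref{modkernel} does exactly the necessary work. Starting from a conjugating matrix in $GL_{n}(\Z_{/p^{\mu+1}})$, pick any integer lift $Q' \in M_{n}(\Z)$; then $AQ' - Q'B \equiv 0 \bmod p^{\mu+1}$, while $\det Q' \not\equiv 0 \bmod p$, since its class mod $p^{\mu+1}$ is a unit. Viewing the entries of $Q'$ as a vector in $\Z^{n^{2}}$ and the linear map $X \mapsto AX - XB$ as multiplication by the integer matrix $L$ introduced just before the proposition, this is precisely the hypothesis of Lemma \ref{modkernel} with $\lambda = 1$. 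The lemma produces $Q \in M_{n}(\Z)$ with $LQ = 0$, i.e. $AQ = QB$, together with $Q \equiv Q' \bmod p$; hence $\det Q \equiv \det Q' \not\equiv 0 \bmod p$, which is $(2)$.

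The only real obstacle is thus $(5) \Rightarrow (2)$: promoting a mod-$p^{\mu+1}$ conjugation to a genuine integer intertwiner without losing invertibility mod $p$. The choice of threshold $p^{\mu+1}$, exactly one power beyond the largest $p$-elementary divisor of $L$, is precisely what allows Lemma \ref{modkernel} to be applied with $\lambda = 1$; and the resulting congruence $Q \equiv Q' \bmod p$ is just strong enough to propagate the non-vanishing of the determinant. Everything else in the equivalence is essentially bookkeeping.
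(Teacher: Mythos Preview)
Your proof is correct and hinges on the same key device as the paper's: invoking Lemma~\ref{modkernel} with $\lambda=1$ to lift a mod-$p^{\mu+1}$ conjugation to an exact integer intertwiner congruent to it mod $p$. The difference is in the cyclic ordering. The paper runs $(1)\Rightarrow(2)\Rightarrow(3)\Rightarrow(4)\Rightarrow(5)\Rightarrow(1)$, and for $(1)\Rightarrow(2)$ it appeals to Lemma~\ref{padickernel} to pass from a $\Z_p$-solution of $AX-XB=0$ to an integer solution congruent mod $p$. Your ordering $(2)\Rightarrow(3)\Rightarrow(1)\Rightarrow(4)\Rightarrow(5)\Rightarrow(2)$ replaces that step by the trivial inclusion $\Z_{(p)}\subset\Z_p$ and the reduction $\Z_p\to\Z_{/p^k}$, so Lemma~\ref{padickernel} is never needed. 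This is a mild economy: both arguments are short, but yours shows that Lemma~\ref{modkernel} alone suffices to close the loop.
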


\begin{proof}
Suppose that $(1)$ holds. Consider the system of $n^2$ linear equations in the $n^2$ variables $s_{ij} \textup{ with } i,j\in \{1,2,\dots,n\}$ given by equating the entries of $AS-SB$ to $0$. Since $A$ and $B$ are similar over $\Z_p$, there is a matrix $S\in GL(n,\Z_p)$ satisfying $AS-SB=0$, from which follows by the lemma that there is a matrix $Q\in M(n,\Z)$ such that $AQ-QB=0$ and $Q\equiv_p S$; for this matrix, $\det Q \equiv_p \det S$, which must be nonzero modulo $p$, since $S\in GL_n(\Z_p)$, and therefore $(2)$ holds.

If $(2)$ holds, consider $Q$ as a matrix over $\Z_{(p)}$. Since $p\nmid \det Q$, $\det Q$ is a unity in 
$\Z_{(p)}$, and so $Q\in GL(n,\Z_{(p)})$, from which $(3)$ follows.

If $AQ=QB$ with $Q\in GL(n,\Z_{(p)})$, consider the image of $Q$ under the canonical homomorphism $\Z_{(p)}\rightarrow \Z_{/p^{k}}$. By an argument similar to the above we have $Q\in GL(n,\Z_{/p^{k}})$, and so $A$ and $B$ are similar over $\Z_{/p^{k}}$, and so, in particular, over $\Z_{/p^{\mu+1}}$.

Finally, suppose that $A$ and $B$ are similar over $\Z_{/p^{\mu +1}}$: there exists $X' \in M(n,\Z)$, with determinant prime to $p$, such that $A X' \equiv X'B \, \mod p^{\mu+1}$. Lemma \ref{modkernel} implies that  $A X=X B$ for some $X \in M(n,\Z)$ with $X \equiv X' \,\mod p$. This implies that the determinant of $X$ is prime to $p$ and so $X \in GL(n,\Z_{p})$.  
\qedhere
\end{proof}

Lemma \ref{modkernel} and the last implication Proposition \ref{equiv} are contained, in a more general form, in \cite{AO83}. As to Lemma \ref{padickernel} and the other parts of Proposition \ref{equiv}, although we are unable to give a reference for the original statement, we believe they are known.

\begin{remark}
An immediate consequence of the proposition is that two matrices $A,B\in M(n,\Z)$ are conjugated over $\Z_p$ for every prime $p$ if and only if there exist matrices $Q, S \in M(n,\Z)$ such that
 \[A Q = Q B, \,\,  A S = S B \, \mbox{  and  } \gcd(\det Q, \det S)=1.\]
\end{remark}

The following Lemma will be needed for the proof of the main results:

\begin{lemma}
Let $R$ be an order of $K$ containing $\Z[\beta]$ and $I$ a $R$-fractional ideal. Then we have the identity
$$\bigcap_{\pp \supseteq (p)} I_{\pp}=\Z_{(p)}I$$
where the intersection is over the prime ideals $\pp$ containing $p$.
\end{lemma}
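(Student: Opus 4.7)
The plan is to prove the two inclusions
\[
\Z_{(p)} I \;\subseteq\; \bigcap_{\pp \supseteq (p)} I_{\pp} \qquad \text{and} \qquad \bigcap_{\pp \supseteq (p)} I_{\pp} \;\subseteq\; \Z_{(p)} I
\]
separately, with the intermediate ring $T := \Z_{(p)} R$ (the localization of $R$ at the multiplicative set $\Z \setminus (p)$) playing the central role in the second one.

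For the first inclusion, the key point is the containment of multiplicative sets $\Z \setminus (p) \subseteq R \setminus \pp$. Since $R$ is an order, every non-zero prime $\pp$ is maximal with finite residue field, so $\pp \cap \Z$ is a non-zero prime of $\Z$; combined with $p \in \pp$ this forces $\pp \cap \Z = (p)$. Hence $\Z_{(p)} \subseteq R_{\pp}$ and $\Z_{(p)} I \subseteq R_{\pp} I = I_{\pp}$ for every $\pp$ above $p$, giving the inclusion.

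For the reverse inclusion, I would first note that $T = \Z_{(p)} R$ is a Noetherian domain (as a localization of $R$), and its non-zero prime ideals are precisely $\pp T$ with $\pp$ a prime of $R$ containing $p$; since $R/pR$ is a finite ring, there are only finitely many such, so $T$ is semi-local. By the standard iterated-localization identity $T_{\pp T} = R_{\pp}$, so that $(TI)_{\pp T} = I_{\pp}$ for each such $\pp$. I would then invoke the classical fact that a fractional ideal $M$ of a Noetherian domain equals $\bigcap_{\mathfrak{m}} M_{\mathfrak{m}}$, with $\mathfrak{m}$ ranging over maximal ideals---a consequence of the localization proposition recalled earlier together with the observation that if $x \in \bigcap_{\mathfrak{m}} M_{\mathfrak{m}}$ then the conductor $\{t \in T : tx \in M\}$ is an ideal of $T$ contained in no maximal ideal and must therefore equal $T$. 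Applied to the fractional ideal $M = T I = \Z_{(p)} I$ of $T$, this yields
\[
\Z_{(p)} I \;=\; \bigcap_{\mathfrak{m} \subset T \text{ maximal}} (TI)_{\mathfrak{m}} \;=\; \bigcap_{\pp \supseteq (p)} I_{\pp}.
\]

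I do not foresee a serious obstacle: the whole argument amounts to setting up the correspondence between primes of $R$ above $p$ and maximal ideals of $T$, and then transferring the ``intersection of localizations'' principle from $R$ to this semi-local ring. The only mildly delicate point is checking that the two-step localization $R \to T \to T_{\pp T}$ coincides with the one-step localization $R \to R_{\pp}$, which is a routine application of universal properties of localization.
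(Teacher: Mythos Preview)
Your proof is correct and takes a genuinely different route from the paper's. For the easy inclusion $\Z_{(p)}I \subseteq \bigcap_{\pp} I_{\pp}$ both arguments are essentially identical, reducing to the observation that $\pp \cap \Z = (p)$ forces $\Z \setminus (p) \subseteq R \setminus \pp$.

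For the reverse inclusion the two approaches diverge. The paper proceeds by an explicit construction: given $x \in \bigcap_i I_{\pp_i}$ with $x = a_i/q_i$, it uses the Chinese Remainder Theorem to splice the numerators and denominators into a single expression $x = a/q$ with $a \in I$ and $q \notin \pp_i$ for all $i$, and then multiplies numerator and denominator by the Galois conjugates of $q$ to force the denominator to be the norm $N(q) \in \Z$, arguing that $p \nmid N(q)$. Your argument instead passes through the intermediate semi-local ring $T = \Z_{(p)}R$, identifies its maximal ideals with the primes of $R$ over $p$, and then invokes the standard Noetherian fact $M = \bigcap_{\mathfrak{m}} M_{\mathfrak{m}}$ together with transitivity of localization $T_{\pp T} = R_{\pp}$.

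Your approach is cleaner and more robust: it works uniformly for any one-dimensional Noetherian domain and avoids the paper's appeal to field embeddings and norms, which as written tacitly assumes that the conjugates $\sigma(q)$ lie in $K$ (i.e., that $K/\Q$ is Galois) in order to speak of $\sigma^{-1}(\pp)$ as a prime of $R$. The paper's argument, on the other hand, is more self-contained in that it does not defer to the ``intersection of localizations'' principle but effectively reproves the special case needed here by hand.
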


\begin{proof}
Suppose that $x=\frac{y}{m}\in \Z_{(p)}I$ with $m\in \Z$ and $p\nmid m$. Since $\pp \cap \Z=(p)$ and $m\notin (p)$ we conclude that $m\notin \pp$, and therefore clearly $x\in I_{\pp}$ for every such $\pp$,
proving that $\Z_{(p)}I\subseteq \bigcap_{\pp \supseteq (p)} I_{\pp}$.

Now let the ideals $\pp \supseteq (p)$ be $\pp_{1}, \ldots, \pp_{k}$ and suppose for some $x$ we have $x\in \pp_{i}$ for $1\leq i\leq k$. Then, we can write $x=\frac{a_{i}}{q_{i}}$ with $a_{i}\in I$ and
$q_{i}\notin \pp_{i}$ for each $i$. Choose, by the Chinese Remainder Theorem, $z_{i}$ for $1\leq i\leq k$ such that $z_{i}\in \pp_{j}$ for $j\neq i$ and $z_{i}\notin \pp_{i}$, and let 
\[a=\sum_{i=1}^k a_{i}z_{i} \mbox{ and  } q=\sum_{i=1}^k q_{i}z_{i}.\] 
Notice that $a\in I$ since $a_{i}\in I$ and $I$ is a fractional ideal. Also, $q\notin \pp_{i}$ since $q_{j}z_{j}\in \pp_{i}$ and therefore $q\equiv z_{i}q_{i} \, \, \textup{mod }\pp_{i}$ which is not 
in $\pp_{i}$ because $z_{i}$, $q_{i}\notin \pp_{i}$ and $\pp_{i}$ is a prime ideal. Then we have
\[x=\frac{\sum_{i=1}^k xq_{i}z_{i}}{\sum_{i=1}^k q_{i}z_{i}}=\frac{a}{q}=\frac{\sum_{i=1}^k a_{i}z_{i}}{\sum_{i=1}^k q_{i}z_{i}}=\frac{a\prod_{\sigma \neq \Id} \sigma(q)}{\prod_{\sigma} \sigma(q)}. \; \; (*)\]

The last product runs over the complex embenddings of $\Q[\beta]$. Now we know that $\prod_{\sigma} \sigma(q)=N(q)\in \Z$, and we will prove that $p\nmid N(q)$. Suppose not and $\prod_{\sigma} \sigma(q)\in (p)\subseteq \pp$ (where $\pp$ is any prime ideal containing $p$); then, since $\pp$ is a prime ideal, $\sigma(q)\in \pp$ for some $\sigma$, and therefore $q\in \sigma^{-1}(\pp)$. But $\sigma^{-1}(\pp)$ is clearly also a prime ideal (because it is maximal); as $p\in \pp$ and $\sigma(p)=p$, $p\in \sigma^{-1}(\pp)$, and therefore $p\in \sigma^{-1}(\pp)=\pp_{i}$ for some $i$, contradicting our construction of $q$. Now by $(*)$ it is clear that $x\in \Z_{(p)}I$, proving that $\bigcap_{\pp \supseteq (p)} I_{\pp}\subseteq \Z_{(p)}I$. \qedhere
\end{proof}

\begin{theorem}
Let $R$ be an order of $K$ containing $\Z[\beta]$ and $I, J$ be $R$-fractional ideals. Then $\Z_{(p)}I$ and $\Z_{(p)}J$ are arithmetically equivalent if and only if $I_{\pp}$ is arithmetically equivalent to $J_{\pp}$ for every prime $\pp\supseteq (p)$.
\label{main}
\end{theorem}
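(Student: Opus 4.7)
My approach is to pass to the semi-local ring $S := \Z_{(p)} R$ and use condition (3) of the definition of weak equivalence combined with the preceding lemma. Concretely, $S$ has exactly $k$ maximal ideals $\pp_1 S,\dots,\pp_k S$, where $\pp_1,\dots,\pp_k$ are the prime ideals of $R$ above $(p)$, and $S_{\pp_i S}=R_{\pp_i}$, so $(\Z_{(p)} I)_{\pp_i S}=I_{\pp_i}$ and likewise for $J$.

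The forward implication is straightforward: if $\Z_{(p)} J=\alpha\,\Z_{(p)} I$ for some $\alpha\in K$, then for every $\pp\supseteq (p)$ the inclusion $\Z_{(p)}\subseteq R_\pp$ lets us multiply by $R_\pp$ and obtain $J_\pp=\alpha I_\pp$.

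For the converse, starting from arithmetic equivalence $I_{\pp_i}=\alpha_i J_{\pp_i}$ we get $(I_{\pp_i}:J_{\pp_i})(J_{\pp_i}:I_{\pp_i})=R_{\pp_i}$. Using the standard compatibility of localization with the colon operation and with products of ideals (valid since $R$ is Noetherian, so all ideals in sight are finitely generated), this reads $\bigl((I:J)(J:I)\bigr)_{\pp_i}=R_{\pp_i}$. The preceding lemma, applied to the fractional ideal $(I:J)(J:I)$, then yields
\[
\Z_{(p)}(I:J)(J:I)\;=\;\bigcap_{\pp\supseteq (p)}\bigl((I:J)(J:I)\bigr)_\pp\;=\;\bigcap_{\pp\supseteq (p)}R_\pp\;=\;\Z_{(p)}R\;=\;S.
\]
Applying the same localization compatibilities now to the multiplicative set $\Z\setminus(p)$, the left-hand side equals $(\Z_{(p)} I:\Z_{(p)} J)(\Z_{(p)} J:\Z_{(p)} I)$. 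Hence $1$ lies in this product, which is condition (3) for $\Z_{(p)} I$ and $\Z_{(p)} J$ to be weakly equivalent as $S$-fractional ideals.

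It then remains to promote this weak equivalence in $S$ to arithmetic equivalence. The common coefficient ring of $\Z_{(p)} I$ and $\Z_{(p)} J$ is $\Z_{(p)}(I:I)=\Z_{(p)}(J:J)$, itself a semi-local Noetherian domain; by the invertible-equals-principal content of Proposition \ref{semilocal}, any $\Z_{(p)}(I:I)$-invertible ideal $X$ with $\Z_{(p)} J=\Z_{(p)} I\cdot X$ must be principal, giving $\Z_{(p)} J=\alpha\,\Z_{(p)} I$ for some $\alpha\in K$. The only delicate point I expect is the bookkeeping of localization commuting with colon and product for the two multiplicative systems ($\Z\setminus(p)$ and $R\setminus\pp_i$), and the identification of the coefficient ring of $\Z_{(p)} I$ inside $S$; both are routine in the Noetherian setting but need to be stated cleanly to legitimize the semi-local step.
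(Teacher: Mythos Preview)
Your argument is essentially correct and takes a genuinely different route from the paper, but there is one slip worth flagging. From $I_{\pp_i}=\alpha_i J_{\pp_i}$ you assert $(I_{\pp_i}:J_{\pp_i})(J_{\pp_i}:I_{\pp_i})=R_{\pp_i}$; in fact this product equals $(J_{\pp_i}:J_{\pp_i})$, the localized coefficient ring, which may strictly contain $R_{\pp_i}$ when $(J:J)\supsetneq R$. Fortunately only the containment $\supseteq R_{\pp_i}$ is needed, so the displayed chain still gives $1\in \Z_{(p)}(I:J)(J:I)$ and the rest of your argument goes through unchanged. Your final paragraph, working in the coefficient ring $\Z_{(p)}(I:I)$ rather than in $S$, already handles this correctly.

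As for the comparison: the paper's converse is constructive. It takes the individual $\alpha_i$ with $I_{\pp_i}=\alpha_i J_{\pp_i}$ and uses the Chinese Remainder Theorem to splice them into a single $a\in K$ satisfying $I_{\pp_i}=a J_{\pp_i}$ simultaneously for all $i$; the lemma then gives $\Z_{(p)}I=a\,\Z_{(p)}J$ directly. Your route is more structural: you never build $a$ explicitly, but instead verify condition~(3) of weak equivalence via localization-compatibility of colon and product, and then invoke Proposition~\ref{semilocal} (invertible $=$ principal in a semi-local ring) to upgrade weak to arithmetic equivalence. The paper's method gives an explicit generator and avoids appealing to the semi-local machinery; yours shows more transparently why the result is really an instance of ``weak equivalence localizes to arithmetic equivalence.'' For the forward direction your one-line argument (extend scalars from $\Z_{(p)}$ to $R_\pp$) is cleaner than the paper's, which goes through the intersection formula and then re-localizes.
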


\begin{proof}
For the direct implication, suppose that $\Z_{(p)}I=a\Z_{(p)}J$ for some $a\in \Q[\beta]$. Using the lemma, we get that
\[\bigcap_{\pp\supseteq (p)} I_{\pp}=\Z_{(p)}I=a\Z_{(p)}J=a\bigcap_{\pp\supseteq (p)} J_{\pp}=\bigcap_{\pp\supseteq (p)} aJ_{\pp}=\bigcap_{\pp\supseteq (p)} (aJ)_{\pp}.\]
Localizing this equality in each prime $\pp_{i}\supseteq (p)$, we get that
\[\bigcap_{\pp\supseteq (p)} (I_{\pp})_{\pp_{i}}=\bigcap_{\pp\supseteq (p)} ((aJ)_{\pp})_{\pp_{i}}.\]
But, since $I_{\pp_{i}}\subseteq (I_{\pp})_{\pp_{i}}$ and $I_{\pp_{i}}=(I_{\pp_{i}})_{\pp_{i}}$, the left hand side must be $I_{\pp_{i}}$, and in the same way the right hand side is $aJ_{\pp_{i}}$. Therefore $I_{\pp_{i}}=aJ_{\pp_{i}}$ and $I_{\pp_{i}}$ is arithmetically equivalent to $J_{\pp_{i}}$. As $\pp_{i}$ was arbitrary, we have the desired implication.

Suppose now that $I_{\pp_{i}}$ is arithmetically equivalent to $J_{\pp_{i}}$ for each $1\leq i\leq k$; for each $i$ there exists $a_{i}\in \Q[\beta]$ such that $I_{\pp_{i}}=a_{i}J_{\pp_{i}}$. We will find an $a$ that satisfies this uniformly for every $i$. Every ideal contains a product of some prime ideals, say $(a_{i})\supseteq \pp_{i}^{k_{i}}\prod_{j=1}^{s} \qq_{j}$ where $\qq_{j}\neq \pp_i$ are prime ideals. Notice that $(\qq_j)_{\pp_i}=R_{\pp_i}$ because $\qq_j$ is coprime with $\pp_i$ and $\pp_i R_{\pp_i}$ is the only maximal ideal in $R_{\pp_i}$. Thus, when we localize the equation we get that 
$a_iR_{\pp_i}=(a_i)_{\pp_i}\supseteq (\pp_i^{k_i})_{\pp_i}=\pp_i^{k_i}R_{\pp_i}$.

Consider now $a\equiv a_i \, \, \textup{mod } \pp_i^{k_i}$, which exists by the Chinese Remainder Theorem. Then $a=a_i+p_i$ for some $p_i\in \pp_i^{k_i}$, and thus  $aJ_{\pp_i}=a_iJ_{\pp_i}+p_iJ_{\pp_i}=a_iJ_{\pp_i}=I_{\pp_i}$ where the second equality follows from $p_iJ_{\pp_i}\subseteq \pp_{i}^{k_i}R_{\pp_i}J\subseteq a_iR_{\pp_i}J=a_iJ_{\pp_i}$. We obtain the reverse implication by simply applying the lemma:

\[\Z_{(p)}I=\bigcap_{\pp\supseteq (p)} I_{\pp}=\bigcap_{\pp\supseteq (p)} aJ_{\pp}=a\bigcap_{\pp\supseteq (p)} J_{\pp}=a\Z_{(p)}J.\qedhere\]
\end{proof}

With this result in hands, a criterion to local similarity in terms of ideals is easy because when we apply the Latimer MacDuffee theorem over $\Z_{(p)}$, a PID,  the ideal we get is precisely

\[\Z_{(p)}v_1+\ldots+\Z_{(p)}v_n=\Z_{(p)}I_A.\]

\begin{corollary}
Two matrices $A,B$ are similar over $\Z_p$ if and only if the ideals $(I_A)_{\pp}$ and $(I_B)_{\pp}$ are arithmetically equivalent for every prime ideal $\pp\supseteq (p)$.
\label{maincorol1}
\end{corollary}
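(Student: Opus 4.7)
The plan is to assemble the corollary from three pieces that are already in place: Proposition \ref{equiv}, the Latimer--MacDuffee--Taussky theorem (Theorem 2) applied to a suitable PID, and Theorem \ref{main}. The logical chain I would produce is
\[
A \sim_{\Z_p} B \;\Longleftrightarrow\; A \sim_{\Z_{(p)}} B \;\Longleftrightarrow\; \Z_{(p)}I_A \text{ and } \Z_{(p)}I_B \text{ are arith.\ equiv.} \;\Longleftrightarrow\; (I_A)_{\pp} \text{ and } (I_B)_{\pp} \text{ are arith.\ equiv.\ for every } \pp \supseteq (p).
\]

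First I would invoke the equivalence of (1) and (3) in Proposition \ref{equiv}, which already reduces similarity over the $p$-adic integers to similarity over the localization $\Z_{(p)}$. This is the only place where the passage from the completion to the localization enters, and the proposition does all the work for us.

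Next I would apply Theorem 2 with the PID $D = \Z_{(p)}$. The hypothesis that $f$ is irreducible and separable over $\Z$ is inherited over $\Z_{(p)} \subseteq \Q$, so the theorem applies: $A$ and $B$ are similar over $\Z_{(p)}$ if and only if the ideals they determine in $\Z_{(p)}[\beta]$ are arithmetically equivalent. Here I would spell out, as the excerpt does immediately before the corollary, why the $\Z_{(p)}[\beta]$-ideal attached to $A$ coincides with $\Z_{(p)}I_A$: if $v = (v_1, \ldots, v_n)$ is the $\beta$-eigenvector used to form $I_A = \bigoplus \Z v_i$, the very same $v$ serves as a $\beta$-eigenvector when $A$ is viewed over $\Z_{(p)}$, so the resulting ideal is $\sum \Z_{(p)} v_i = \Z_{(p)} I_A$. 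The analogous identification holds for $B$.

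Finally I would invoke Theorem \ref{main} with $R = \Z[\beta]$ (so that the $R$-fractional ideals $I_A$ and $I_B$ give the hypothesis) to convert the equivalence of $\Z_{(p)}I_A$ and $\Z_{(p)}I_B$ into the equivalence of their further localizations $(I_A)_{\pp}$ and $(I_B)_{\pp}$ at each prime $\pp$ of $\Z[\beta]$ lying above $(p)$. This is precisely the statement required.

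I do not anticipate any real obstacle: the only point that deserves a line of justification is the identification of the Latimer--MacDuffee ideal over $\Z_{(p)}$ with $\Z_{(p)}I_A$, and even that is essentially a tautology once one notices that the same $K^n$-eigenvector can be used on both sides. Everything else is a three-link chain of biconditionals.
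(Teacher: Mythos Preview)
Your proposal is correct and matches the paper's proof essentially line for line: the paper invokes Proposition \ref{equiv} to pass from $\Z_p$ to $\Z_{(p)}$, applies Latimer--MacDuffee with $D=\Z_{(p)}$ (having noted just before the corollary that the resulting ideal is $\Z_{(p)}I_A$), and then finishes with Theorem \ref{main}. There is nothing to add.
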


\begin{proof}
By Proposition \ref{equiv} matrices $A,B$ are similar over $\Z_p$ if and only if they are similar over $\Z_{(p)}$, and by Latimer-MacDuffee applied to $D=\Z_{(p)}$ 
this happens if and only if $\Z_{(p)}I_A$ and $\Z_{(p)}I_B$ are arithmetically equivalent, which proves the desired result by Theorem \ref{main}. \qedhere
\end{proof}

For this to happen for every $p$ the localizations of the ideals must be arithmetic equivalent for every $\pp$, that is, the ideals must be weakly equivalent.

\begin{corollary}
\label{maincorol2}
Two matrices $A,B$ are similar over $\Z_p$ for every rational prime $p$ if and only if the ideals $I_A$ and $I_B$ are weakly equivalent.
\end{corollary}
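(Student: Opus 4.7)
The plan is to combine Corollary \ref{maincorol1} (the single-prime criterion) with Proposition \ref{localarithmetic} (the local-global principle for weak equivalence), together with the observation that quantifying over rational primes $p$ is the same as quantifying over all nonzero prime ideals of the relevant order.

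First I would fix an order $R \supseteq \Z[\beta]$ — for instance the coefficient ring $(I_A:I_A)$, or simply $\Z[\beta]$ itself — and note that since $R$ has finite rank over $\Z$, it is integral over $\Z$, so every nonzero prime ideal $\pp$ of $R$ contracts to a rational prime $\pp \cap \Z = (p)$. Hence as $p$ ranges over the rational primes, the collection of primes $\pp$ of $R$ with $\pp \supseteq (p)$ sweeps out exactly the nonzero prime ideals of $R$. Next I would apply Corollary \ref{maincorol1} one prime at a time: $A$ and $B$ are similar over $\Z_p$ if and only if $(I_A)_\pp$ and $(I_B)_\pp$ are arithmetically equivalent for every $\pp \supseteq (p)$. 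Conjoining over all rational primes $p$ and using the observation above, the hypothesis ``similar over $\Z_p$ for every rational prime $p$'' becomes the assertion that $(I_A)_\pp$ and $(I_B)_\pp$ are arithmetically equivalent for every nonzero prime $\pp$ of $R$. Proposition \ref{localarithmetic} then translates this local condition into the weak equivalence of $I_A$ and $I_B$, closing the loop.

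The main point requiring care — and the only genuine obstacle — is treating the coefficient rings of $I_A$ and $I_B$ symmetrically, since weak equivalence as defined requires $(I_A:I_A) = (I_B:I_B)$, and this equality is not part of the hypothesis. I would handle it by noting that arithmetic equivalence of $(I_A)_\pp$ with $(I_B)_\pp$ at each prime $\pp$ automatically forces the local coefficient rings $((I_A)_\pp : (I_A)_\pp)$ and $((I_B)_\pp : (I_B)_\pp)$ to coincide at every $\pp$, and then invoking the standard local-to-global principle for fractional ideals (agreement at every localization implies global agreement) to upgrade this to $(I_A:I_A) = (I_B:I_B)$. Once this bookkeeping is in place, the proof is essentially a one-line application of the two previously established results.
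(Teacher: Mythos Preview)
Your proof is correct and follows exactly the paper's route: apply Corollary \ref{maincorol1} prime by prime, observe that every nonzero prime $\pp$ of the order lies over some rational $p$ so that the two quantifiers match up, then invoke Proposition \ref{localarithmetic}. The coda on coefficient rings is unnecessary, since by conditions (1) or (3) in the paper's definition of weak equivalence the equality $(I_A:I_A)=(I_B:I_B)$ is a consequence rather than a hypothesis, and Proposition \ref{localarithmetic} already applies without that extra bookkeeping.
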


\begin{proof}
By Corollary \ref{maincorol1}, $A$ and $B$ are similar over $\Z_p$ for every rational prime $p$ if and only if $(I_A)_{\pp}$ and $(I_B)_{\pp}$ are arithmetically equivalent for every $\pp$, which happens if and  only if $I_A$ and $I_B$ are weakly equivalent, by Proposition \ref{localarithmetic}.\qedhere
\end{proof}

The results obtained so far give rise to two natural questions: on one hand, how to decide about local conjugacy of matrices by the solution of a finite set of modular equations; and on the other, given a pair of matrices, for which primes is local conjugacy not trivial. This last problem will be considered in the next section and we end this one with some remarks concerning the first question and some related issues.

Proposition \ref{equiv} contains already an answer to this question: given matrices $A$ and $B$ and a prime $p$, conjugacy over $\Z_{p}$ is equivalent to conjugacy over $\Z_{/p^{\mu +1}}$.  The computation of the $p$ factor of the Smith Normal Form of an integer matrix may be performed in an efficient way, without the use of determinantal divisors, as shown in \cite{Ger77}.

In dimension $2$ it is possible to give simple criteria for local conjugacy.

\begin{theorem}
Let $A\in M_2(\Z)$ be a matrix with characteristic polynomial $f$ and $\ell=\textup{sup} \{k: \exists \lambda \owns A \equiv \lambda I \, \textup{mod} \ p^k\}$. Then $f$ and $\ell$ completely define
the similarity class of $A$.
\end{theorem}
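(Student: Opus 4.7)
The theorem asserts that, with $p$ fixed, the pair $(f,\ell)$ is a complete invariant for $\Z_p$-similarity of $A\in M_2(\Z)$ with irreducible characteristic polynomial $f$. Invariance is immediate: if $B=PAP^{-1}$ with $P\in GL_2(\Z_p)$, then $B-\lambda I = P(A-\lambda I)P^{-1}$ for every $\lambda\in\Z_p$, and because $P$ and $P^{-1}$ have entries in $\Z_p$, conjugation does not alter the minimum $p$-adic valuation of the entries of a matrix; taking the maximum over $\lambda$ gives $\ell(A)=\ell(B)$. The substantive direction is the converse, and my strategy is to subtract a well-chosen scalar from both matrices so as to reduce to a conjugacy problem for matrices that are non-scalar modulo $p$, a class that, as I will show, has a single $GL_2(\Z_p)$-orbit per characteristic polynomial.

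The first step is to show that, for given $f$ and $\ell\ge 1$, the scalar $\lambda$ appearing in $A\equiv\lambda I\pmod{p^\ell}$ is forced modulo $p^\ell$. Taking the trace of the congruence yields $2\lambda\equiv -a\pmod{p^\ell}$, and taking the determinant yields $f(\lambda)\equiv 0\pmod{p^{2\ell}}$. For $p$ odd the trace condition alone determines $\lambda$ in $\Z/p^\ell\Z$ because $2$ is a unit. For $p=2$ the trace condition only pins $\lambda$ down modulo $2^{\ell-1}$, leaving two candidates $\lambda_0$ and $\lambda_0+2^{\ell-1}$; a short computation using $2\lambda_0+a\equiv 0\pmod{2^\ell}$ gives
\[
f(\lambda_0+2^{\ell-1})-f(\lambda_0)=2^{2\ell-2}(2m+1)
\]
for some integer $m$, whose $2$-valuation is exactly $2\ell-2<2\ell$, so only one of the two candidates can satisfy the determinantal condition modulo $2^{2\ell}$. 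This uniqueness at $p=2$ is the most delicate technical point of the argument.

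With a common $\lambda$ in hand for $A$ and $B$, I write $A=\lambda I+p^\ell M_A$ and $B=\lambda I+p^\ell M_B$ with $M_A,M_B\in M_2(\Z)$. The maximality of $\ell$ forces both $M_A$ and $M_B$ to be non-scalar modulo $p$, and expanding $\det(sI-A)=f(s)$ shows that $\mathrm{tr}(M_A)$ and $\det(M_A)$ are explicit functions of $f$, $\lambda$ and $\ell$ alone, so $M_A$ and $M_B$ share a common characteristic polynomial $g$. The proof is then completed by the following cyclic vector lemma: if $M\in M_2(\Z_p)$ is non-scalar modulo $p$, there exists $v\in\Z_p^2$ with $v$ and $Mv$ linearly independent modulo $p$, for otherwise every vector of $\Z_{/p}^2$ would be an eigenvector of the reduction $\overline{M}$, forcing $\overline{M}$ to be scalar. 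Then $\{v,Mv\}$ is automatically a $\Z_p$-basis because the determinant of the change of basis is a unit modulo $p$ and hence a unit in $\Z_p$, and in this basis Cayley--Hamilton identifies the matrix of $M$ with the companion matrix of $g$. Applying the lemma to $M_A$ and $M_B$ produces $Q\in GL_2(\Z_p)$ with $QM_AQ^{-1}=M_B$, and the same $Q$ conjugates $A$ to $B$.
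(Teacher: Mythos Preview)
Your argument is correct. The paper itself does not prove this theorem; it simply cites \cite{AOPV08}, so your self-contained proof goes well beyond what the paper provides.

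Your route---subtracting the unique scalar $\lambda$ modulo $p^{\ell}$, reducing to the matrices $M_A,M_B$ which are non-scalar modulo $p$ and share the characteristic polynomial $g(s)=p^{-2\ell}f(p^{\ell}s+\lambda)$, and then invoking a cyclic-vector argument to put each in companion form over $\Z_p$---is exactly the standard one and is essentially the $n=2$ argument in the cited reference. The only point requiring care is the uniqueness of $\lambda\pmod{p^\ell}$ at $p=2$, and your valuation computation $f(\lambda_0+2^{\ell-1})-f(\lambda_0)=2^{2\ell-2}(2m+1)$ handles it cleanly. One tiny remark: the case $\ell=0$ is implicitly covered (take $\lambda=0$, $M_A=A$), and the case $\ell=\infty$ forces $A$ to be scalar over $\Z$, hence determined by $f$ alone; you may wish to mention these explicitly for completeness.
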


\begin{proof}
See \cite{AOPV08}.\qedhere
\end{proof}

In the same paper there is a similar, although much more involved, answer to the local similarity problem when $n=3$.

We have also two results telling when is a matrix locally similar to its companion matrix. Notice that by our earlier results this is the same as asking when is $I_A$ invertible.

\begin{lemma}
Suppose $A\in M_n(\Z)$ has characteristic polynomial $f$ and let $C$ be the companion matrix of $f$. Then $A$ is similar to $C$ over $\Z_p$ if and only if (their reductions module $p$) are similar
over $\Z/p\Z$.
\end{lemma}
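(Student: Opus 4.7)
The plan is to use the cyclic vector characterization of similarity to a companion matrix. Recall that for any commutative ring $R$, a matrix $A \in M_n(R)$ is similar over $R$ to the companion matrix $C$ of its characteristic polynomial $f$ if and only if there exists a vector $v \in R^n$ such that $v, Av, \ldots, A^{n-1}v$ form an $R$-basis of $R^n$ (a ``cyclic vector'' for $A$). Indeed, if such a $v$ exists, the matrix $M=[v\,|\,Av\,|\,\cdots\,|\,A^{n-1}v]$ lies in $GL_n(R)$ and the Cayley--Hamilton theorem applied to $A$ yields $AM=MC$ column by column. Conversely, if $M\in GL_n(R)$ satisfies $AM=MC$, then $v:=Me_1$ is easily checked to be cyclic, since the columns of $M$ become $v,Av,\ldots,A^{n-1}v$.

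The forward direction of the lemma is then immediate: reducing any $P\in GL_n(\Z_p)$ with $PA=CP$ modulo $p$ gives an element of $GL_n(\Z/p\Z)$ conjugating the reductions.

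For the converse, suppose $\bar A$ and $\bar C$ are similar in $M_n(\Z/p\Z)$. The companion matrix $\bar C$ has the standard basis vector $e_1$ as a cyclic vector, since $\bar C^{i}e_1=e_{i+1}$ for $0\le i\le n-1$. Transporting this cyclic vector by the mod-$p$ conjugation, $\bar A$ also has a cyclic vector $\bar v\in(\Z/p\Z)^n$. Lift $\bar v$ arbitrarily to $v\in\Z^n$ and form $M=[v\,|\,Av\,|\,\cdots\,|\,A^{n-1}v]\in M_n(\Z)$. By construction $M\bmod p=[\bar v\,|\,\bar A\bar v\,|\,\cdots\,|\,\bar A^{n-1}\bar v]$ is invertible in $M_n(\Z/p\Z)$, so $\det M\not\equiv 0\pmod p$; hence $\det M\in\Z_p^\times$ and $M\in GL_n(\Z_p)$. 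Cayley--Hamilton then gives $AM=MC$, establishing similarity of $A$ and $C$ over $\Z_p$.

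There is no real obstacle here; the only substantive point is the standard identity $AM=MC$ for the cyclic-vector matrix, which follows from Cayley--Hamilton applied to the last column. The proof also fits smoothly with the paper's earlier reduction, since one could alternatively invoke Proposition \ref{equiv} to translate $\Z_p$-similarity into mod-$p^{\mu+1}$ similarity and then argue inductively, but the cyclic-vector approach bypasses this lifting entirely.
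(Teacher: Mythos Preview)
Your proof is correct and follows essentially the same route as the paper: both directions use the cyclic-vector characterization of similarity to a companion matrix, lifting a mod-$p$ cyclic vector to $\Z$ (or $\Z_p$) and observing that the resulting determinant is a $p$-adic unit. Your exposition is a bit more explicit about invoking Cayley--Hamilton to verify $AM=MC$, but the argument is the same.
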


\begin{proof}
If $A$ is similar to $C$ over $\Z_p$, then it's clear it is also over $\Z/p\Z$ by projecting. A matrix $A$ is similar to its companion matrix if and only if there is a basis $\{v_1, \ldots, v_n\}$ such that $Av_{i}=v_{i+1}$ for $1\leq i<n$, which happens if and only if there is a vector $v=v_1$ such that $\{v, Av, A^2v,\ldots, A^{n-1}v\}$ is a basis. If there is such a vector $v\in \Z/p\Z$, pick any $v'\equiv_p v$ with $v'\in \Z_p$; then $\det(v' , Av', A^2v',\ldots, A^{n-1}v')\equiv_p \det(v, Av, A^2v,\ldots, A^{n-1}v)$ which is nonzero in $\Z/p\Z$, and therefore is a unit over $\Z_p$.
\qedhere
\end{proof}

\begin{corollary}
Suppose $A\in M_n(\Z)$ has characteristic polynomial $f$ such that $f$ doesn't have repeated roots in $\Z/p\Z$. Then $A$ is similar to the companion matrix of $f$ over $\Z_p$.
\end{corollary}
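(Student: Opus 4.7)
The plan is to invoke the immediately preceding lemma, which reduces the problem to showing that the reductions $\bar A$ and $\bar C$ modulo $p$ are similar over the field $\mathbb{F}_p = \Z/p\Z$. So, writing $\bar f = f \bmod p$, it suffices to prove that the $\mathbb{F}_p$-matrix $\bar A$ (whose characteristic polynomial is $\bar f$) is similar to the companion matrix of $\bar f$.

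The key linear algebra fact I would use is the following: over any field, a square matrix is similar to the companion matrix of its characteristic polynomial if and only if it admits a cyclic vector, equivalently, if and only if its minimal polynomial coincides with its characteristic polynomial. This is a standard consequence of the rational canonical form (or of the structure theorem for finitely generated modules over the PID $\mathbb{F}_p[t]$), and I would simply quote it.

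Next, I would verify the hypothesis of that fact for $\bar A$. The hypothesis of the corollary is that $\bar f$ has no repeated roots, i.e., $\bar f$ is squarefree in $\mathbb{F}_p[t]$. Let $\bar m \in \mathbb{F}_p[t]$ denote the minimal polynomial of $\bar A$; it divides $\bar f$, and over an algebraic closure of $\mathbb{F}_p$ it vanishes at every eigenvalue of $\bar A$, hence shares the same irreducible factors as $\bar f$. Since $\bar f$ is squarefree, this forces $\bar m = \bar f$. By the fact quoted above, $\bar A$ is similar to the companion matrix $\bar C$ of $\bar f$ over $\mathbb{F}_p$, and the previous lemma then promotes this to similarity over $\Z_p$, completing the proof.

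There is no real obstacle here: the argument is a short invocation of the preceding lemma together with a classical characterization of non-derogatory matrices. The only minor subtlety is ensuring the correct reading of the phrase \emph{no repeated roots in $\Z/p\Z$}, which I interpret as $\bar f$ being squarefree in $\mathbb{F}_p[t]$ (equivalently, having distinct roots in an algebraic closure); under this reading the argument above goes through cleanly.
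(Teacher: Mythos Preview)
Your proof is correct and follows essentially the same approach as the paper: reduce to $\mathbb{F}_p$ via the preceding lemma, argue that the squarefreeness of $\bar f$ forces the minimal polynomial of $\bar A$ to coincide with $\bar f$, and conclude that $\bar A$ is similar to the companion matrix over $\mathbb{F}_p$. The paper's version is simply terser, asserting the equality of minimal and characteristic polynomials without spelling out the shared-irreducible-factors argument you give.
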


\begin{proof}
Since $A$ has a characteristic polynomial $f$ without repeated roots in $\Z/p\Z$, its characteristic and minimal polynomials over $\Z/p\Z$ are the same, so by the proposition $A$ is similar to its companion matrix over $\Z/p\Z$, and therefore over $\Z_p$.\qedhere
\end{proof}

\section{Orders and Localization}

In this section we consider the second question mentioned in Section 3, namely, the determination of the primes for which local conjugacy is not trivial. We must make this more precise: Fixing a characteristic polynomial we obtain, as described in the introduction, a ring $\Z[\beta]$ with a finite lattice of orders above it. We want to show how local conjugacy of two matrices with associated ideals in the same order depends on the order. In particular, we verify that there exists a finite set of primes, depending only on $\Z[\beta]$, such that all matrices with the given characteristic polynomial are conjugate over $\Z_{p}$, for all $p$ not belonging to that set.

We start by recalling the relations between the ideals in orders as we move ``up'' and ``down'' in the order lattice.

\begin{definition}
Given two commutative rings $S\subseteq R$ let $u:\Id(S)\to \Id(R)$ and $d:\Id(R)\to \Id(S)$ be defined by $u(I)=RI$ and $d(J)=J\cap S$.
\end{definition}

This functions respect some simple properties:

\begin{proposition}
Given rings $S\subseteq R$, we have:
\begin{itemize}
\item $R(J\cap S)=u(d(J))\subseteq J$;
\item $IR\cap S=d(u(I))\supseteq I$;
\item $u(d(u(I)))=u(I)$;
\item $d(u(d(J)))=d(J).$
\end{itemize}
\end{proposition}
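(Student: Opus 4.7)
The plan is to verify each of the four items in sequence, observing that the first two are essentially unwindings of the definitions (plus the elementary containment $I \subseteq IR$), while the last two are formal consequences of the first two combined with the monotonicity of $u$ and $d$ under inclusion. This is the standard pattern of a Galois connection between the ideal lattices $\mathrm{Id}(S)$ and $\mathrm{Id}(R)$, so no deep input is needed.

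For the first item, I would note that $u(d(J)) = R(J \cap S)$ by the definitions, which gives the equality. For the containment $R(J \cap S) \subseteq J$, I would observe that $J \cap S \subseteq J$ and that $J$, being an ideal of $R$, is closed under multiplication by elements of $R$; hence $R(J \cap S) \subseteq RJ \subseteq J$. For the second item, $d(u(I)) = IR \cap S$ is again immediate from the definitions, and the containment $I \subseteq IR \cap S$ follows because $I \subseteq S$ by hypothesis and $I \subseteq IR$ since $1 \in R$.

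For the third item, I would first apply the first bullet with $J$ replaced by $u(I)$ to get $u(d(u(I))) \subseteq u(I)$. For the reverse containment, apply the second bullet to obtain $I \subseteq d(u(I))$, and then apply $u$; since $u$ is monotone (if $I_1 \subseteq I_2$ then $RI_1 \subseteq RI_2$), this yields $u(I) \subseteq u(d(u(I)))$, giving equality. The fourth item is symmetric: the second bullet with $I$ replaced by $d(J)$ gives $d(J) \subseteq d(u(d(J)))$, while the first bullet gives $u(d(J)) \subseteq J$, and applying $d$ (which is likewise monotone, since intersection respects inclusion) yields $d(u(d(J))) \subseteq d(J)$.

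There is no real obstacle here; the only potential pitfall is being careful with the direction of the inclusion in each step and with the monotonicity arguments. Since $R$ and $S$ are merely commutative rings with $S \subseteq R$ (no further hypotheses on finiteness, integrality, or flatness are invoked), every assertion reduces to elementary manipulations with ideals, so the proof should fit comfortably in a short paragraph in the paper.
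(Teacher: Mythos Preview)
Your proof is correct; the paper does not actually supply a proof for this proposition, treating it as a routine fact and moving on. Your argument fills in exactly the details one would expect, via the standard Galois-connection reasoning (the first two inclusions are immediate from the definitions, and the last two equalities follow by monotonicity of $u$ and $d$).
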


This proposition shows that the functions $u$ and $d$ are close to be inverses of each other. In fact this is true in ``a lot of cases.''

\begin{definition}
Given an order $R$ and a rational prime $p$, define $$\Id_p(R)=\{I\in \Id(R): [R:I] \textup{ is a power of } p\}.$$
\end{definition}

Other useful characterizations of $\Id_p(R)$ are the following:

\begin{lemma}
Given an order $R$ and a rational prime $p$, an ideal $I$ is in  $\Id_p(R)$ if and only if there exists $k\in \Z_{\geq 0}$ such that $p^k R\subseteq I\subseteq R$, if and only if $I \cap \Z = p^{k} \Z$. 
\end{lemma}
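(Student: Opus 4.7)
The plan is to prove the chain (a) $\Rightarrow$ (b) $\Rightarrow$ (c) $\Rightarrow$ (a), where I will label the three conditions: (a) $[R:I]$ is a power of $p$ (and $I \subseteq R$, implicit in $I \in \mathrm{Id}_p(R)$ since the index must make sense), (b) $p^k R \subseteq I \subseteq R$ for some $k \geq 0$, (c) $I \cap \Z = p^k \Z$ for some $k \geq 0$. The proof will rest on two structural features that are available for free: the order $R$ is a free $\Z$-module of rank $n$ and contains $\Z$ (in particular $1$), and any $R$-ideal containing an element $a$ automatically contains $aR$.

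For (a) $\Rightarrow$ (b), I would just observe that if $[R:I] = p^m$ then the additive group $R/I$ has order $p^m$, hence is annihilated by $p^m$, giving $p^m R \subseteq I$.

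For (b) $\Rightarrow$ (c), the key step is to evaluate at $1$: since $1 \in R$, we have $p^k = p^k \cdot 1 \in p^k R \subseteq I$, so $p^k \in I \cap \Z$. Therefore $I \cap \Z$ is a nonzero ideal of $\Z$ containing $p^k\Z$, hence equals $p^j\Z$ for some $0 \leq j \leq k$.

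For (c) $\Rightarrow$ (a), if $I \cap \Z = p^k \Z$ then $p^k \in I$, and because $I$ is an $R$-ideal this upgrades immediately to $p^k R \subseteq I$. Since $R \cong \Z^n$ additively, the quotient $R/p^k R$ has order $p^{kn}$, and $R/I$ is a quotient of it, so $[R:I]$ is a power of $p$.

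There really is no hard step here; the whole argument is a sequence of one-line observations. The only place requiring any care is verifying that (a), (b) and (c) all tacitly use $I \subseteq R$ and that $I$ is nonzero, so that $[R:I]$ and $I \cap \Z$ are well-behaved; both are guaranteed by $I \in \mathrm{Id}_p(R)$ (the definition using the index implicitly fixes $I$ as a nonzero integral ideal).
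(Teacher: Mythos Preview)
Your proof is correct. The paper only addresses the equivalence between (a) and (b), leaving the condition on $I\cap\Z$ to the reader, so your cycle through (c) is more complete. For (a) $\Rightarrow$ (b) the paper takes a slightly more indirect route: it picks the minimal positive integer $m$ with $mR\subseteq I$ (whose existence already uses Lagrange implicitly) and then argues by contradiction that a prime $q\neq p$ dividing $m$ would produce an element of order $q$ in $R/I$. Your one-line observation that $|R/I|=p^m$ forces $p^mR\subseteq I$ is the same Lagrange fact applied directly, and is cleaner. The (b) $\Rightarrow$ (a) directions coincide: both use $[R:I]\mid[R:p^kR]=p^{kn}$.
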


\begin{proof}
We prove only the equivalence of the first property with one in the definition: the if part is easy since $p^k R\subseteq I\subseteq R$ implies $[R:I]|[R:p^k R]=p^{k n}$. For the only if part, we know that there is some $m$ such that $m R\subseteq I$. Pick a minimal one and suppose that $m$ is not a power of $p$, and let $q\neq p$ be a prime divisor of $m$ and $m'=m/q$. Then $m' R\nsubseteq I$ and $qm'R\subseteq I$; therefore, there is some $x\in m'R$ such that
$x\notin I$ but $qx\in I$, which means that the element $x+I\in R/I$ has order $q$, contradiction with $|R/I|$ being a power of $p$.\qedhere
\end{proof}

Obviously, the set $P_p(R)$ of prime ideals above $p$ is a subset of ${\rm Id}_{p}(R)$. In fact, if $R$ is for instance a Dedekind domain, ${\rm Id}_{p}(R)$ is just the group generated by those prime ideals.

\begin{proposition}
Given an order $R$ and a rational prime $p$, $\Id_p(R)$ is a sub-monoid of $\Id(R)$. 
\end{proposition}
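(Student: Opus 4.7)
The plan is to invoke the characterization from the preceding lemma: an ideal $I$ of $R$ lies in $\Id_p(R)$ if and only if there exists $k\in \Z_{\geq 0}$ with $p^k R \subseteq I \subseteq R$. Once this is in hand, both the monoid axioms are essentially immediate.

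First I would check that $R$ itself belongs to $\Id_p(R)$, which is obvious since $[R:R]=1=p^0$ (or equivalently, $p^0 R\subseteq R \subseteq R$). This supplies the identity element of the sub-monoid.

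Next, to verify closure under multiplication, take $I,J\in \Id_p(R)$ and choose $k,l\in \Z_{\geq 0}$ with $p^k R\subseteq I\subseteq R$ and $p^l R\subseteq J\subseteq R$. Multiplying the two containments I get
\[p^{k+l}R = (p^k R)(p^l R)\subseteq IJ \subseteq R\cdot R = R,\]
so the lemma gives $IJ\in \Id_p(R)$. Since ideal multiplication in $\Id(R)$ is associative, this shows $\Id_p(R)$ inherits the structure of a sub-monoid of $\Id(R)$.

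There is no real obstacle here; the content of the statement has been moved into the preceding lemma, which converts the index condition $[R:I]=p^{\text{something}}$ into the convenient sandwich $p^k R\subseteq I\subseteq R$ that behaves well under products.
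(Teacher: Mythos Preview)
Your argument is correct and is essentially identical to the paper's: both invoke the preceding lemma to replace the index condition by the sandwich $p^k R\subseteq I\subseteq R$, and then multiply the two inclusions to conclude closure. The only cosmetic difference is that you explicitly note $R\in \Id_p(R)$ for the identity, whereas the paper takes this as obvious and checks only closure.
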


\begin{proof}
We just need to check that $\Id_p(R)$ is closed under multiplication. Suppose $I, J\in \Id_p(R)$; then by the lemma above there are $k, l$ such that $p^k R\subseteq I\subseteq R$ and $p^lR\subseteq J\subseteq R$, implying that $p^{k+l}R\subseteq IJ\subseteq R$, which using the lemma again gives that $IJ\in \Id_p(R)$.\qedhere
\end{proof}

\begin{proposition}
The function $u$ sends elements of $\Id_p(S)$ to elements of $\Id_p(R)$ and the function $d$ sends elements of $\Id_p(S)$ to elements of $\Id_p(R)$.
\end{proposition}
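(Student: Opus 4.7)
The statement has an apparent typo: presumably $d$ should send $\Id_p(R)$ into $\Id_p(S)$. Under that reading, the proposition reduces to two parallel verifications, each of which I would handle by invoking the preceding lemma's characterization: an ideal lies in $\Id_p(\cdot)$ precisely when it is sandwiched between $p^k$ times the ambient ring and the ambient ring itself, for some $k \geq 0$.

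For the statement about $u$, the plan is to start with $I \in \Id_p(S)$ and use the lemma to fix $k$ with $p^k S \subseteq I \subseteq S$. Multiplying the inclusion chain by $R$ (and using $RS = R$) yields $p^k R \subseteq RI \subseteq R$, so $u(I) = RI$ satisfies the same sandwich condition with the same $k$, hence belongs to $\Id_p(R)$. The only thing to note is that $R \cdot (p^k S) = p^k R$, which holds because $p^k \in \Z$ is central.

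For the statement about $d$, I would begin with $J \in \Id_p(R)$ and again fix $k$ with $p^k R \subseteq J \subseteq R$. Intersecting with $S$ gives $p^k S = p^k R \cap S \cap$ anything containing it, but more directly: $p^k S \subseteq p^k R \cap S \subseteq J \cap S$, and $J \cap S \subseteq R \cap S = S$. So $p^k S \subseteq d(J) \subseteq S$, and the lemma again delivers $d(J) \in \Id_p(S)$. Only the inclusion $p^k S \subseteq p^k R \cap S$ is used (not equality), so there is no subtlety about how $R$ sits over $S$.

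I do not anticipate any real obstacle; both parts are essentially one-line consequences of the sandwich characterization in the lemma, and the whole proof could reasonably be presented in a single short paragraph combining the two verifications.
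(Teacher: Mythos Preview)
Your proposal is correct and matches the paper's proof essentially line for line: the paper also invokes the sandwich characterization from the preceding lemma, multiplies by $R$ for the $u$ direction, and intersects with $S$ (noting $p^kS \subseteq p^kR \cap S$) for the $d$ direction. Your identification of the typo in the statement is also accurate.
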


\begin{proof}
Suppose $I\in \Id_p(S)$; then $p^kS\subseteq I\subseteq S$, and multiplying by $R$ we get $p^kR\subseteq IR \subseteq R$, and therefore $IR\in \Id_p(R)$. If $J\in \Id_p(R)$, $p^kR\subseteq J\subseteq R$, 
and intersecting with $S$ gives $p^kS\subseteq p^kR\cap S\subseteq J\cap S\subseteq S$, meaning that $J\cap S\in \Id_p(S)$.\qedhere
\end{proof}

\begin{lemma}
Let $I\in \Id_p(S)$, $J\in \Id_p(R)$ and suppose that $p\nmid[R:S]$. Then $RI+S=R=J+S$.
\end{lemma}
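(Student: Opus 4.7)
The plan is to observe that the two equalities in the conclusion are actually instances of a single statement, then to prove that statement by a short Bezout argument applied to the abelian-group indices.

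First I would handle the reduction. By the preceding lemma applied inside $S$, the hypothesis $I \in \Id_p(S)$ supplies $k \geq 0$ with $p^k S \subseteq I \subseteq S$. Since $1 \in S \subseteq R$ we have $RS = R$, so $RI \subseteq R$, while $p^k R = R \cdot p^k S \subseteq RI$. Hence $RI$ itself lies in $\Id_p(R)$, and the identity $RI + S = R$ becomes the special case $J = RI$ of the identity $J + S = R$ for $J \in \Id_p(R)$. Thus it suffices to prove the latter.

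Next I would prove $J + S = R$ for an arbitrary $J \in \Id_p(R)$. Using the lemma again, pick $k$ with $p^k R \subseteq J \subseteq R$. The quotient $R/S$ is a finite abelian group of order $[R:S]$, so $[R:S]\cdot R \subseteq S$. The hypothesis $p \nmid [R:S]$ makes $p^k$ and $[R:S]$ coprime, so Bezout yields $a,b \in \Z$ with $ap^k + b[R:S] = 1$. For any $r \in R$,
\[
r = (ap^k)\,r + b\,\bigl([R:S]\,r\bigr),
\]
and the first summand lies in $p^k R \subseteq J$ while the second lies in $S$. This proves $R \subseteq J + S$; the reverse inclusion is immediate as $J, S \subseteq R$.

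There is no real obstacle: the only conceptual move is recognising that $RI \in \Id_p(R)$ so that the two halves of the claim collapse to one, after which the abelian-group Bezout argument dispatches both simultaneously.
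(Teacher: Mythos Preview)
Your proof is correct. Both arguments hinge on the coprimality of $[R:S]$ with a power of $p$, but they exploit it differently. The paper argues by indices: since $IR, S \subseteq IR+S \subseteq R$, the index $[R:IR+S]$ divides both $[R:S]$ (coprime to $p$) and $[R:IR]$ (a power of $p$), hence equals $1$; the second equality is declared analogous. You instead first observe that $RI \in \Id_p(R)$, so that the first equality is literally a special case of the second, and then prove $J+S=R$ constructively by writing $1 = ap^k + b[R:S]$ via B\'ezout and decomposing each $r \in R$ as $ap^k r + b[R:S]r \in J + S$. Your reduction makes explicit the symmetry that the paper leaves to the word ``analogous,'' and your element-wise decomposition is slightly more concrete than the paper's counting argument; conversely, the paper's version avoids invoking the fact that $[R:S]\cdot R \subseteq S$ and works purely with multiplicativity of indices.
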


\begin{proof}
Clearly $IR, S\subseteq IR+S\subseteq R$. Therefore $[R:IR+S][IR+S:S]=[R:S]$ and $[R:IR+S][IR+S:IR]=[R:IR]$, implying that $[R:IR+S]$ divides both $[R:S]$ and $[R:IR]$; but the first one is coprime with $p$ and the second one is a power of $p$, which gives $[R:IR+S]=1$ and $IR+S=R$. The second  equality is analogous. \qedhere
\end{proof}

\begin{theorem}
Let $I\in \Id_p(S)$, $J\in \Id_p(S)$ and suppose that $p\nmid[R:S]$. Then $IR\cap S=I$ and $R(J\cap S)=J$. 
\end{theorem}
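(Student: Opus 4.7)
The two containments $I\subseteq IR\cap S$ and $R(J\cap S)\subseteq J$ are already supplied by the general proposition on $u$ and $d$; what has to be done is the reverse inclusions. My strategy is to exploit the fact that, under the coprimality hypothesis, every element of the relevant quotients can be ``killed'' in two complementary ways---one using a $p$-power annihilator coming from membership in $\mathrm{Id}_p$, the other using $n:=[R:S]$, which is coprime to $p$---so that Bezout in $\Z$ forces the element into the target ideal.

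\textbf{Key preliminaries.} Since $|R/S|=n$, Lagrange gives $nR\subseteq S$. By hypothesis $\gcd(n,p)=1$. Write $[S:I]=p^a$, so $p^aS\subseteq I$, and pick $k$ with $p^kR\subseteq J$, so in particular $p^k\in J\cap S$ (because $p^k=p^k\cdot 1\in p^kR\subseteq J$ and $p^k\in\Z\subseteq S$).

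\textbf{Proof of $IR\cap S=I$.} Take $x\in IR\cap S$ and write $x=\sum_i r_iy_i$ with $r_i\in R$ and $y_i\in I$. Then
\[
nx=\sum_i(nr_i)y_i\in S\cdot I=I,
\]
using $nR\subseteq S$. On the other hand, from $x\in S$ and $p^aS\subseteq I$ we get $p^ax\in I$. Since $\gcd(n,p^a)=1$, Bezout yields integers $\alpha,\beta$ with $\alpha n+\beta p^a=1$, and then $x=\alpha(nx)+\beta(p^ax)\in I$.

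\textbf{Proof of $R(J\cap S)=J$.} Take $y\in J\subseteq R$. Since $nR\subseteq S$, $ny\in J\cap S\subseteq R(J\cap S)$. Since $p^k\in J\cap S$, we have $p^ky=p^k\cdot y\in(J\cap S)\cdot R=R(J\cap S)$. Again $\gcd(n,p^k)=1$, so Bezout gives $y\in R(J\cap S)$.

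\textbf{Expected difficulty.} There is no real obstacle once the right ``annihilators'' are identified: the only slightly subtle point is remembering that $[R:S]$ being coprime to $p$ is strong enough to give the \emph{set-theoretic} inclusion $nR\subseteq S$ (not merely that $n$ acts invertibly on some quotient), and dually that $p^kR\subseteq J$ supplies the element $p^k$ inside $J\cap S$. With these two inclusions in hand, the Bezout step finishes both halves in one line each.
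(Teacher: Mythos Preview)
Your proof is correct and clean. It is, however, a genuinely different argument from the paper's.

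The paper proceeds via index counting: using the preceding lemma ($IR+S=R=J+S$) together with the second isomorphism theorem, it shows for instance that $IR/(IR\cap S)\cong R/S$, and then balances indices to conclude that $[IR\cap S:I]$ is simultaneously a $p$-power (as a divisor of $[S:I]$) and coprime to $p$ (as a divisor of $[R:S]$), hence equal to $1$. The second identity is handled analogously.

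Your argument is strictly more elementary: rather than appealing to the lemma $IR+S=R$ and the isomorphism theorems, you work elementwise, using only the two inclusions $nR\subseteq S$ and $p^kR\subseteq J$ (respectively $p^aS\subseteq I$) and a one-line Bezout. This makes the theorem self-contained and independent of the preceding lemma. On the other hand, the paper's route has a bonus: the isomorphism $S/(IR\cap S)\cong R/(IR)$ established along the way is exactly the content of the next proposition (``$S/I\cong R/J$''), whose proof the paper simply records as ``done while proving the preceding theorem''. Your argument does not yield this isomorphism as a by-product, so you would have to prove that proposition separately.

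One small remark: as you implicitly recognise, the statement as printed has a typo---the hypothesis should read $J\in\Id_p(R)$, not $\Id_p(S)$, for the equation $R(J\cap S)=J$ to make sense. Your proof correctly treats $J$ as an ideal of $R$.
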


\begin{proof}
Notice that $I\subseteq IR\cap S\subseteq S$. Since $IR\cap S\subseteq S$, we know that $[I:
I\subseteq IR\cap S]$ divides $[S:I]$, and therefore $[I:IR\cap S]$ is a power of $p$. On the other hand, by the second isomorphism theorem we have $S/(IR\cap S)\cong (IR+S)/(IR)=R/(IR)$. This gives
$[I:IR\cap S][IR: I]=[IR:IR\cap S]=[R:S]$ which is not divisible by $p$, making $[I:IR\cap S]=1$ and $IR\cap S=I$, proving the first equation.

Notice that $R(J\cap S)\subseteq RJ=J$. Sinde $J\cap S\subseteq R(J\cap S) \subseteq R$, $[J:R(J\cap S)][R:J]=[R:R(J\cap S)]$ which is a power of $p$ since $J\in \Id_p(R)\Rightarrow u(d(J))\in \Id_p(R)$.
On the other hand, $J/(J\cap S)\cong (J+S)/S=R/S$ by the second isomorphism theorem, giving $[J:R(J\cap S)][R(J\cap S): J\cap S]=[J:J\cap S]=[R:S]$, and we can finish as before. \qedhere
\end{proof}

\begin{corollary}
If $p\nmid[R:S]$ the restrictions of $u$ and $d$ are inverse isomorphisms between $\Id_p(S)$ and $\Id_p(R)$.
\end{corollary}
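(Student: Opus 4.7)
My plan is to deduce this corollary directly from the preceding Theorem together with the Proposition showing that $u$ restricts to a map $\Id_p(S)\to\Id_p(R)$ and $d$ restricts to a map $\Id_p(R)\to\Id_p(S)$. Once both restrictions are known to land in the right sub-monoids, the identities $d(u(I))=IR\cap S=I$ for $I\in\Id_p(S)$ and $u(d(J))=R(J\cap S)=J$ for $J\in\Id_p(R)$, established in the theorem under the hypothesis $p\nmid[R:S]$, say exactly that the two restrictions are mutually inverse bijections.

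It remains to upgrade ``bijection'' to ``monoid isomorphism.'' For $u$ this is immediate from the definition of ideal multiplication: for any ideals $I_1,I_2$ of $S$ one has $R(I_1I_2)=(RI_1)(RI_2)$, so $u(I_1I_2)=u(I_1)u(J_2)$, and $u(S)=R$, so $u$ is a monoid homomorphism. Its inverse $d$ is then automatically a monoid homomorphism as well, because in any category a set-theoretic inverse of a monoid homomorphism is itself a monoid homomorphism.

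I do not anticipate a real obstacle: the substantive content — namely $d\circ u=\Id$ and $u\circ d=\Id$ on the $p$-parts — has already been proved in the preceding theorem, and the multiplicativity of $u$ is formal. The corollary is essentially a packaging statement, so the ``proof'' will amount to a short paragraph citing the Proposition (for well-definedness), the Theorem (for the inverse identities), and observing multiplicativity.
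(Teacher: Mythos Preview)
Your proposal is correct and matches the paper's own proof essentially line for line: the paper also cites the theorem for the mutual inverse property, checks $u(I)u(J)=(RI)(RJ)=RIJ=u(IJ)$ to get that $u$ is a homomorphism, and then notes that $d$ is an isomorphism because it is the inverse of one. (Minor typo: you wrote $u(I_1)u(J_2)$ where you meant $u(I_1)u(I_2)$.)
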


\begin{proof}
By the above theorem they are inverse functions. Since
\[ u(I)u(J)=(RI)(RJ)=RIJ=u(IJ),\]
we get that $u$ is an isomorphism; $d$ is also an isomorphism because inverses of isomorphisms are isomorphisms as well.
\qedhere
\end{proof}

\begin{proposition}
Suppose that $p\nmid [R:S]$ and let $I\in \Id_p(S)$, $J\in \Id_p(R)$ be corresponding ideals by the isomorphisms $u$ and $d$ (that is, $u(I)=J$ and $d(J)=I$). Then $S/I\cong R/J$.
\end{proposition}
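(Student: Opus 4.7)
The plan is to recognize this as an instance of the second isomorphism theorem, made possible by the coprimality assumption $p \nmid [R:S]$.

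First I would set up the natural ring homomorphism
\[
\varphi \colon S \longrightarrow R/J, \qquad s \longmapsto s + J,
\]
obtained by composing the inclusion $S \hookrightarrow R$ with the quotient map. Its kernel is $S \cap J$, which by hypothesis equals $d(J) = I$. Hence $\varphi$ induces an injection $\bar\varphi \colon S/I \hookrightarrow R/J$.

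Next I would prove surjectivity of $\varphi$. Surjectivity is equivalent to $S + J = R$, and this is exactly the second equality in the preceding lemma (applied to $J \in \mathrm{Id}_p(R)$ under the assumption $p \nmid [R:S]$). Combining injectivity with surjectivity, the first isomorphism theorem yields the desired isomorphism $S/I \cong R/J$.

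There is really no obstacle here: once the two preliminary facts — $S \cap J = I$ (from the corollary identifying $u$ and $d$ as mutual inverses on $\mathrm{Id}_p$) and $S + J = R$ (from the lemma) — are in hand, this is essentially a one-line application of the second isomorphism theorem. The only thing worth remarking is that the index identities $[J : R(J\cap S)][R(J\cap S):J\cap S] = [R:S]$ appearing in the proof of the preceding theorem already give the conclusion in the form of orders of finite abelian groups, so an alternative (purely counting) proof would follow by observing that the injection $S/I \hookrightarrow R/J$ has source and target of equal (finite) cardinality; but the conceptual route via $S + J = R$ is cleaner and more informative.
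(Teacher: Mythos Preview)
Your proof is correct and follows the same approach as the paper. The paper's own proof simply says ``This was done while proving the preceding theorem,'' referring to the line $S/(IR\cap S)\cong (IR+S)/(IR)=R/(IR)$ obtained via the second isomorphism theorem together with the lemma $IR+S=R$; your argument is exactly this, just phrased through the first isomorphism theorem for the composite map $S\hookrightarrow R\twoheadrightarrow R/J$.
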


\begin{proof}
This was done while proving the preceding theorem.\qedhere
\end{proof}

We now turn our attention to prime ideals. We will see how (in some cases) the functions $u$ and $d$ preserve prime ideals. 

\begin{proposition}
Let $S\subseteq R$ be orders and $\wp$ a prime ideal of $R$. Then $S/(\wp\cap S)\cong R/\wp$ and $\wp\cap S$ is a prime ideal of $S$. 
\end{proposition}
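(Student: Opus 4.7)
\emph{Plan.} My strategy is to derive this proposition directly from the results immediately preceding it, reading the hypothesis $p\nmid[R:S]$ (with $(p)=\wp\cap\Z$) as standing throughout this cluster of statements, as the preamble ``in some cases'' suggests. Under that convention, everything reduces to two ingredients already in place: the corollary that $u$ and $d$ are mutually inverse isomorphisms between $\Id_p(S)$ and $\Id_p(R)$, and the proposition identifying $S/I\cong R/J$ for corresponding ideals.

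The first step is to locate $\wp$ inside $\Id_p(R)$. Since $R$ is an order, it is integral over $\Z$, so $\wp\cap\Z$ is a nonzero prime $(p)\subset\Z$, whence $pR\subseteq\wp$. Then $R/\wp$ is a quotient of the finite ring $R/pR$ and, being an integral domain, is in fact a finite field of characteristic $p$; in particular $[R:\wp]$ is a power of $p$, so $\wp\in\Id_p(R)$. The corollary on $u$ and $d$ then gives $\wp\cap S=d(\wp)\in\Id_p(S)$ and $u(d(\wp))=\wp$, and the immediately preceding proposition supplies the ring isomorphism $S/(\wp\cap S)\cong R/\wp$. Because $R/\wp$ is a field, so is $S/(\wp\cap S)$, which forces $\wp\cap S$ to be maximal, hence prime. (The primality of $\wp\cap S$ actually holds with no hypothesis on indices: for $a,b\in S$ with $ab\in\wp\cap S\subseteq\wp$, primality of $\wp$ puts one of $a,b$ in $\wp$, and it already lies in $S$.)

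The main obstacle I anticipate is interpretational rather than computational: without the hypothesis $p\nmid[R:S]$ the first assertion of the proposition is simply false. For instance, with $R=\Z[i]$, $S=\Z[2i]$ and $\wp=(1+i)$, a short calculation shows $\wp\cap S=2S$, so $|S/(\wp\cap S)|=4$ while $|R/\wp|=2$. A careful write-up should therefore either fold this hypothesis into the statement or flag that it is inherited from the running context; modulo that clarification, the argument itself is just a two-line reduction to the preceding two results.
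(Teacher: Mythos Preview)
The paper does \emph{not} read the hypothesis $p\nmid[R:S]$ into this proposition; it argues directly and unconditionally. Its proof runs: $\wp\subseteq S+\wp\subseteq R$ with $S+\wp\neq\wp$ (since $1\in S$), and ``as $\wp$ is maximal, this gives $S+\wp=R$''; then the second isomorphism theorem yields $S/(\wp\cap S)\cong(S+\wp)/\wp=R/\wp$, and since $R/\wp$ is a domain, $\wp\cap S$ is prime. So your reduction to the preceding corollary and proposition is a genuinely different route.

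Your counterexample, however, is miscomputed. With $R=\Z[i]$, $S=\Z[2i]$, $\wp=(1+i)$, an element $c+di$ lies in $\wp$ iff $c\equiv d\pmod 2$; intersecting with $S$ forces $d$ even, hence $c$ even, so $\wp\cap S=2\Z+2i\Z$, which is \emph{not} $2S=2\Z+4i\Z$. Thus $S/(\wp\cap S)\cong\Z/2\Z\cong R/\wp$, and this example does not witness failure.

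That said, your suspicion is well founded and the paper's argument has a gap: $S+\wp$ is only a sub\emph{ring} of $R$, not an ideal, so maximality of $\wp$ does not force $S+\wp=R$. A genuine counterexample is $K=\Q(\sqrt{-3})$, $R=\Z[\omega]$ with $\omega=\tfrac{-1+\sqrt{-3}}{2}$, $S=\Z[\sqrt{-3}]=\Z+2\omega\Z$, and $\wp=2R$ (the prime $2$ is inert in $R$). Here $\wp=2\Z+2\omega\Z\subseteq S$, so $\wp\cap S=\wp$ and $S/(\wp\cap S)\cong\Z/2\Z$, while $R/\wp\cong\mathbb{F}_4$. The second conclusion, that $\wp\cap S$ is prime, is of course unconditional (as you note), but the isomorphism claim genuinely requires something like $p\nmid[R:S]$. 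Under that hypothesis your proof via the $u,d$ isomorphism and the preceding quotient identification is correct; alternatively, one can patch the paper's direct argument by observing that $[R:S+\wp]$ divides both $[R:S]$ and $[R:\wp]$, hence equals $1$ when $p\nmid[R:S]$.
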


\begin{proof}
Notice that $\wp \subseteq S+\wp \subseteq R$ and $S+\wp \neq \wp$ because that would imply that $S\subseteq \wp$ which can't happen since $1\notin \wp$. As $\wp$ is maximal, this gives $S+\wp =R$ and by the second isomorphism theorem $S/(\wp\cap S)\cong (S+\wp)/\wp= R/\wp$. Now $\wp$ being prime means that $R/\wp$ is a domain and therefore $S/(\wp\cap S)$ is also a domain and $\wp\cap S$ is a prime ideal 
of $S$.\qedhere
\end{proof}

This means that it's always the case that going down preserves prime ideals. This is not true in general for going up, but it is if we assume also that $p\nmid [R:S]$.

\begin{proposition}
Let $S\subseteq R$ be orders and $\pp\in P_p(S)$. Suppose that $p\nmid [R:S]$. Then $S/\pp\cong R/(R\pp)$ and $R\pp$ is a prime ideal of $R$.
\end{proposition}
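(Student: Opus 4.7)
The plan is to reduce this to the immediately preceding results, since nearly all of the work has already been done. Specifically, the corollary about the isomorphism between $\Id_p(S)$ and $\Id_p(R)$ (valid when $p\nmid[R:S]$) and the proposition that corresponding ideals yield isomorphic quotients together hand us the desired conclusion once we observe that $\pp$ lies in $\Id_p(S)$.

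First I would note that, by hypothesis, $\pp \in P_p(S)$ means $\pp$ is a prime ideal of $S$ lying above $p$, and in particular $\pp \in \Id_p(S)$ (since $p \in \pp$, we have $pS \subseteq \pp \subseteq S$, so the characterization in the earlier lemma applies). Therefore, under the isomorphism $u \colon \Id_p(S) \to \Id_p(R)$, the ideal $\pp$ is sent to $R\pp \in \Id_p(R)$, with inverse $d(R\pp) = R\pp \cap S = \pp$.

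Next, I apply the preceding proposition (``corresponding ideals have isomorphic quotients'') to $I = \pp$ and $J = R\pp$: since $u(\pp) = R\pp$ and $d(R\pp) = \pp$, we obtain the ring isomorphism
\[
S/\pp \;\cong\; R/(R\pp).
\]

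Finally, because $\pp$ is prime in $S$, the quotient $S/\pp$ is an integral domain; by the isomorphism above, $R/(R\pp)$ is also an integral domain, and hence $R\pp$ is a prime ideal of $R$. The main obstacle would have been building the machinery $u$, $d$ and showing they are quotient-preserving inverse isomorphisms on the $p$-parts, but since this is all available from the preceding results, the proof reduces to a two-line application.
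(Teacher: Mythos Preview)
Your proof is correct and follows essentially the same approach as the paper: invoke the earlier quotient-isomorphism result (since $\pp\in\Id_p(S)$) to get $S/\pp\cong R/(R\pp)$, then transfer the structural property of the quotient. The only cosmetic difference is that the paper uses that $\pp$ is \emph{maximal} (nonzero primes in an order are maximal), concluding $R/(R\pp)$ is a field and hence $R\pp$ is maximal, whereas you argue via integral domains to conclude $R\pp$ is prime; both reach the stated conclusion.
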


\begin{proof} 
Since $\pp\in \Id_p(S)$, $S/\pp\cong R/(R\pp)$. Since $\pp$ is prime, and therefore maximal, the left hand side is a field, and so the right hand side is a field as well implying $R\pp$ is a maximal ideal 
of $R$.\qedhere
\end{proof}

Since $u$ and $d$ are isomorphism (preserving the multiplicative structure) factorizations into prime ideals are also preserved. So let us now apply these results when $R=O_K$ is a Dedekind domain.

\begin{theorem}
Let $S$ be an order with fraction field $K$ and $p$ be a prime such that $p\nmid [O_K:S]$. Then the ideal $pS$ has a factorization in prime ideals of $S$. Also, if we factor
$pO_K=\prod_{i=1}^m \wp_i^{k_i}$ we can factor $pS=\prod_{i=1}^m \pp_i^{k_i}$ where $\pp_i=d(\wp_i)$.
\end{theorem}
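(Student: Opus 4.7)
The plan is to exploit the multiplicative isomorphism between $\Id_p(S)$ and $\Id_p(O_K)$ established in the corollary, and transport the factorization of $pO_K$ (which exists because $O_K$ is a Dedekind domain) down to $S$ via $d$.

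First I would verify that $pS \in \Id_p(S)$ and $pO_K \in \Id_p(O_K)$; both are immediate from the characterization $p^k R \subseteq I \subseteq R$ in the earlier lemma, taking $k=1$. Next I would check that these two ideals correspond under the isomorphism $u$, i.e.\ $u(pS) = pO_K$. This is just the computation $u(pS) = O_K \cdot pS = p O_K$, using that $O_K$ is an $S$-module containing $1$. Consequently $d(pO_K) = pS$ by the theorem on $\Id_p$.

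Now factor $pO_K = \prod_{i=1}^m \wp_i^{k_i}$ in the Dedekind domain $O_K$. Each $\wp_i$ lies above $p$, hence $\wp_i \in \Id_p(O_K)$. Applying $d$ (which, by the hypothesis $p \nmid [O_K:S]$ and the corollary, is a multiplicative isomorphism from $\Id_p(O_K)$ to $\Id_p(S)$) to this equation yields
\[
pS \;=\; d(pO_K) \;=\; \prod_{i=1}^m d(\wp_i)^{k_i} \;=\; \prod_{i=1}^m \pp_i^{k_i}.
\]
Finally, each $\pp_i = d(\wp_i) = \wp_i \cap S$ is a prime ideal of $S$: this is precisely the content of the earlier proposition asserting that going down preserves primality (and in fact $S/\pp_i \cong O_K/\wp_i$).

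The only subtle step is ensuring that $d$ indeed acts as a ring-of-ideals homomorphism on the factorization, since in general $d$ need not respect multiplication. This is where the hypothesis $p \nmid [O_K:S]$ is essential: without it, $u$ and $d$ would not be multiplicative isomorphisms on $\Id_p$, and the transported ``factorization'' could fail. Since this is precisely the statement of the corollary preceding the theorem, the whole argument is really just a clean assembly of the results already established for the pair $(S, O_K)$.
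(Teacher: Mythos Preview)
Your proof is correct and follows exactly the same route as the paper: factor $pO_K$ in the Dedekind domain $O_K$, note that the $\wp_i$ lie in $\Id_p(O_K)$, and apply the multiplicative isomorphism $d$ from the preceding corollary. The paper's proof is a one-line sketch of this; you have simply filled in the details (in particular the verification that $u(pS)=pO_K$ so that $d(pO_K)=pS$, and the appeal to the proposition that $d$ preserves primes).
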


\begin{proof}
The factorization of $pO_k$ always exists because $O_k$ is a Dedekind domain, and notice that clearly $\wp_i\in P_p(O_K)\subseteq \Id_P(O_K)$. Now just apply $d$ to the factorization and use the fact that it is an isomorphism. \qedhere
\end{proof}

\begin{corollary}
Let $S$ be a order with fraction field $K$ and $p$ be a prime such that $p\nmid [O_K:S]$. Then the order of an ideal $\pp\in P_p(S)$ is always $S$ and $\pp$ is invertible in $S$.
\end{corollary}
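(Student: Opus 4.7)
The goal splits in two parts: the coefficient ring statement $(\pp:\pp)=S$ and the invertibility of $\pp$ in $S$. Since invertibility implies that the coefficient ring equals the ambient order (by the characterization recalled in Section~2, namely $I$ invertible in $R$ iff $R=(I:I)$ and $(R:(R:I))=I$), I would only prove invertibility.

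By Proposition \ref{localarithmetic}, $\pp$ is $S$-invertible if and only if $\pp S_\qq$ is principal for every prime $\qq$ of $S$. The case $\qq\neq \pp$ is free: both ideals are maximal since they are nonzero primes in the order $S$, so $\pp\not\subseteq \qq$, picking $x\in\pp\setminus\qq$ makes $x$ a unit in $S_\qq$, and $\pp S_\qq=S_\qq$ is trivially principal. The substance is the case $\qq=\pp$.

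The cleanest route I see is to establish the stronger identification
\[S_\pp \;=\; (O_K)_{\wp},\]
where $\wp=u(\pp)=O_K\pp$ is the prime of $O_K$ corresponding to $\pp$ by the previous Proposition. Once this is granted, $S_\pp$ is a localization of the Dedekind domain $O_K$ at a nonzero prime, hence a DVR, and so its maximal ideal $\pp S_\pp$ is principal, finishing the proof. To prove the identification I would use $p\nmid[O_K:S]$ in two places. First, set $m=[O_K:S]$: Lagrange gives $mO_K\subseteq S$, and since $\pp\cap \Z=(p)$ with $\gcd(m,p)=1$ we get $m\notin \pp$. So any $x\in O_K$ can be written $x=(mx)/m\in S_\pp$, giving $O_K\subseteq S_\pp$; the same observation shows each $y\in O_K\setminus \wp$ is a unit in $S_\pp$ (otherwise $my\in \pp\subseteq \wp$, and $\wp$ prime with $y\notin\wp$ would force $p\mid m$), yielding $(O_K)_\wp\subseteq S_\pp$. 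Second, for the reverse inclusion I need that $t\in S\setminus\pp$ does not land in $\wp$: if it did, then $t\in O_K\pp\cap S=d(u(\pp))=\pp$ by the inverse-isomorphism theorem proved just before this section, a contradiction.

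The only place any real obstruction lives is in the reverse inclusion $S_\pp\subseteq (O_K)_\wp$, because that is where the hypothesis $p\nmid[O_K:S]$ is used \emph{indirectly}, through the identity $d(u(\pp))=\pp$. Once one notices that this identity is exactly what rules out $t\in\wp$ for $t\in S\setminus\pp$, the whole argument collapses to the two routine verifications above.
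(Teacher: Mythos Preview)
Your argument is correct. The reduction to invertibility is fine, the case $\qq\neq\pp$ is trivially handled, and the identification $S_\pp=(O_K)_\wp$ goes through exactly as you say: $mO_K\subseteq S$ with $m\notin\pp$ gives $(O_K)_\wp\subseteq S_\pp$, while $d(u(\pp))=\pp$ (the Theorem immediately preceding this Corollary) gives $\wp\cap S=\pp$ and hence $S_\pp\subseteq (O_K)_\wp$. Then $S_\pp$ is a DVR and $\pp S_\pp$ is principal.

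The paper, however, intends a different and shorter route: the Corollary has no explicit proof because it is meant to fall out directly from the preceding factorization Theorem. Since $pS=\prod_i \pp_i^{k_i}$ with the $\pp_i$ exactly the primes in $P_p(S)$, any $\pp\in P_p(S)$ equals some $\pp_i$, and then $\pp_i\cdot\bigl(\tfrac{1}{p}\prod_{j}\pp_j^{k_j-\delta_{ij}}\bigr)=S$ exhibits an inverse for $\pp$. So the paper's approach is purely via the multiplicative structure transported by the isomorphisms $u,d$, whereas yours is a direct localization argument showing the stronger fact that $S_\pp$ is a DVR. Your route is more structural and independently useful; the paper's is a one-line consequence of what was just established.
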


If $\pp \triangleleft S$ is a prime ideal of $S$, then $\pp$ is invertible if and only if $(\pp:\pp)=S$.

We will now prove a converse of the above corollary showing that the rational primes $p$ under the prime ideals that fail to be invertible are precisely the ones dividing $[O_K:S]$.

\begin{proposition}
Let $S$ be a order with fraction field $K$ and $p$ be a prime. Every prime $\pp\in P_p(S)$ is invertible if and only if $p\nmid [O_K:S]$.
\end{proposition}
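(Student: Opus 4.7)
The plan is to prove the converse direction, since the forward implication is already contained in the corollary above. So I assume that every prime $\pp \in P_p(S)$ is invertible in $S$, and I aim to show that $p \nmid [O_K : S]$.

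First, I would localize at each such $\pp$. Because $\pp$ is a maximal ideal in the one-dimensional Noetherian domain $S$, the localization $S_\pp$ is a one-dimensional local Noetherian domain with maximal ideal $\pp S_\pp$. Invertibility of $\pp$ together with Proposition \ref{semilocal} forces $\pp S_\pp$ to be principal, so $S_\pp$ is a discrete valuation ring and in particular integrally closed in $K$. I would then identify $(O_K)_\pp = (S \setminus \pp)^{-1} O_K$ with the integral closure of $S_\pp$ in $K$: one inclusion uses that $O_K$ is integral over $S$, and the reverse inclusion uses that any monic relation over $S_\pp$ can be cleared of denominators in $S \setminus \pp$ to exhibit the element as belonging to $O_K$ divided by a unit of $S_\pp$. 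Since $S_\pp$ is already integrally closed, this yields $S_\pp = (O_K)_\pp$ for every $\pp \in P_p(S)$.

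Next, I would apply the lemma preceding Theorem \ref{main} twice with $R = S$. Taking the fractional ideal to be $S$ itself gives $\bigcap_{\pp \supseteq (p)} S_\pp = \Z_{(p)} S$; taking it to be $O_K$ — which is a fractional $S$-ideal because $[O_K:S]\cdot O_K \subseteq S$ — gives $\bigcap_{\pp \supseteq (p)} (O_K)_\pp = \Z_{(p)} O_K$. Combined with the pointwise equality from the previous step, I obtain the single identity $\Z_{(p)} S = \Z_{(p)} O_K$.

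To conclude, I would tensor the short exact sequence $0 \to S \to O_K \to O_K/S \to 0$ of $\Z$-modules with $\Z_{(p)}$, which (by flatness) yields $(O_K/S) \otimes_\Z \Z_{(p)} = 0$. Since $O_K/S$ is a finite abelian group, tensoring with $\Z_{(p)}$ extracts precisely its $p$-primary component, so that component vanishes and $p \nmid |O_K/S| = [O_K : S]$. The step likely to require the most care is the identification $S_\pp = (O_K)_\pp$, because one must track that the multiplicative set used to localize $O_K$ lives inside $S$ and pair this with the fact that a DVR equals its own integral closure in its fraction field; once this is in hand, the remaining arguments are routine applications of the intersection lemma and elementary facts about finite abelian groups.
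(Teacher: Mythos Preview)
Your proof is correct, but it follows a genuinely different route from the paper's. Both arguments converge on the identity $\Z_{(p)}S=\Z_{(p)}O_K$ and then deduce $p\nmid[O_K:S]$, but they reach each step differently. To obtain $\Z_{(p)}S=\Z_{(p)}O_K$, you localize at each $\pp\in P_p(S)$, observe that $S_\pp$ is a DVR, and use the standard fact that integral closure commutes with localization to get $S_\pp=(O_K)_\pp$; the intersection lemma then assembles these into the global equality. The paper instead works directly at the semi-local ring $\Z_{(p)}S$: invertibility of all maximal ideals makes every ideal invertible, hence principal by Proposition~\ref{semilocal}, so $\Z_{(p)}O_K=a\,\Z_{(p)}S$, and idempotence of $\Z_{(p)}O_K$ as a fractional ideal forces $a$ to be a unit. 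For the final step you tensor the short exact sequence $0\to S\to O_K\to O_K/S\to 0$ with $\Z_{(p)}$ and invoke flatness, whereas the paper argues by contradiction with Cauchy's theorem, producing an explicit element $x\in O_K\setminus S$ with $px\in S$ and showing it must already lie in $S$. Your approach is cleaner and leans on standard commutative algebra (localization of integral closures, flatness), while the paper's is more self-contained and avoids those background facts. One small caveat: the intersection lemma you invoke is stated in the paper for orders containing $\Z[\beta]$, so you are implicitly assuming either that $S$ lies in that lattice (which is the ambient context of Section~4) or that the lemma holds for arbitrary orders, which it does.
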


\begin{proof}
The if part is already done. Suppose now that $p|[O_K:S]$ and every prime ideal $\pp\in P_p(S)$ is invertible. Therefore $\Z_{(p)}\pp\triangleleft \Z_{(p)}S$ are invertible and, since they are all the maximal ideals in $\Z_{(p)}S$, it follows that every ideal in $\Z_{(p)}S$ is invertible. But since $\Z_{(p)}S$ is a semi-local ring, by Proposition \ref{semilocal}, it follows that $\Z_{(p)}S$ is a PID. As $\Z_{(p)}S \, \Z_{(p)}O_K=\Z_{(p)}O_K$ we know that $\Z_{(p)}O_K$ is a $\Z_{(p)}S$-module, and therefore it is a fractional ideal (it's clearly finite). Then $\Z_{(p)}O_K=a\Z_{(p)}S$; however,
$$\Z_{(p)}O_K=\Z_{(p)}S \, \Z_{(p)}O_K=\frac{1}{a}\Z_{(p)}O_K \, \Z_{(p)}O_K=\frac{1}{a}\Z_{(p)}O_K=\Z_{(p)}S.$$
Assuming $p||O_K/S|$, by Cauchy Theorem there is some $x\in O_K \setminus S$ such that $px\in S$. For this $x$, $x\in O_K\subseteq \Z_{(p)}O_K=\Z_{(p)}S$, so we can write $x=\frac{s}{q}$ where $s\in S$ and $q\in \Z\setminus p\Z$. But this means that $ps=qpx\in qS\cap pS=pqS$ (because $pS$ and $qS$ are coprime), and therefore $x=\frac{ps}{pq}\in S$, contradiction! \qedhere
\end{proof}

Recall that, by Corollary \ref{maincorol1} from Section 3, two matrices $A$ and $B$ are conjugate over $\Z_{p}$ if and only if the localizations of the ideals $I_{A}$ and $I_{B}$ at every prime ideal $\pp$  above $p$ are arithmetically equivalent. But, if $p\nmid [O_K:S]$, where $S$ is the ring of coefficients of those ideals, the $\pp$ above $p$ are all invertible and so $S_{\pp}$ are PID and all ideals are arithmetically equivalent.

In particular, if $f(x)$ is the minimal polynomial of $\beta$ and $p\nmid [O_{K}:\Z[\beta]]$, all matrices with characteristic polynomial $f(x)$ are conjugate over $\Z_{p}$. A sufficient condition for this is that $p^{2}$ doesn't divide the discriminant of $f(x)$.

\section{Concluding remarks}
As already mentioned in the introduction,  it was proved in \cite{BR12} that given $A,B\in GL(n,\Z)$ with the same irreducible characteristic polynomial, weak equivalence of the associated ideals $I_{A}$ and $I_{B}$ implies that $A$ and $B$ are conjugate over $\overline{\Z}$, the profinite completion of $\Z$, which is nothing but the direct product of the $p$-adic rings $\Z_{p}$. The results in the present paper provide a converse and an alternative proof of that statement.

On the other hand, in \cite{BR15} it was proved that $I_{A}$ and $I_{B}$ are weakly equivalent if and only if $A$ and $B$ are $2$-block conjugate, i.e., there exist matrices $A',B'\in GL(n,\Z)$ and $M,N \in GL(2 n,\Z)$ such that \[(A \oplus A')M=M(B \oplus B) \,\,\mbox{  and  } \,\,  (A \oplus A)N=N(B \oplus B').\]
When $A$ and $B$ are not conjugate over $\Z$, a $2$-block conjugacy of $A$ and $B$ intertwines the dynamics of $A$ and $B$ in a nontrivial manner. Although some of the dynamical consequences of $2$-block conjugacy of $A$ and $B$ have been explored in \cite{BR15}, there remains much to be understood.

A fruitful stage for the study of the conjugacy of integer matrices is provided by its interplay among weak equivalence of ideals, $2$-block conjugacy, and conjugacy over $\Z_p$. As seen here, different aspects of the conjugacy problem of integer matrices are manifested in several terms that combine various algebraic and dynamical properties associated to the integer matrices. The hope is that from this stage, new invariants for the conjugacy problem for irreducible hyperbolic toral automorphisms will be found for arbitrary $n$.


\begin{thebibliography}{9}

\bibitem{AO83}
H. Appelgate, H. Onishi, Similarity problem over $\textup{SL}(n, \Z_p)$, Proceedings Of The
American Mathematical Society, 87, 233--238 (1983).

\bibitem{AOPV08}
N. Avni, U. Onn, A. Prasad, L. Vaserstein, Similarity classes of $3\times 3$ matrices over a local principal ideal ring, arXiv:0708.1608v2 (2008).

\bibitem{BR12}
L. Bakker, P. Martins Rodrigues,
A Profinite Module Conjugacy Invariant for Hyperbolic Toral Automorphisms,  Discrete and Continuous Dynamical Systems, vol. 32, n0.6 (2012).

\bibitem{BR15}
L. Bakker, P. Martins Rodrigues, Block Conjugacy of Irreducible Toral Automorphisms, arXiv:1511.00763 (2015).

\bibitem{DTZ62}
E. C. Dade, O. Taussky, H. Zassenhaus, On the theory of orders, in particular on the semigroup of ideal classes and genera of an order in an algebraic number field, Math. Annalen 148, 31--64 (1962).

\bibitem{Ger77} 
L.J. Gerstein, A Local Approach to Matrix Equivalence,  Linear Algebra and Its Applications 16, 221--232 (1977).

\bibitem{LMD33} C. G. Latimer and C. C. MacDuffee, A correspondence between classes of ideals and
classes of matrices, Ann. of Math., vol.34 (1933), 313-316.

\bibitem{Mil14}
J. S. Milne, Algebraic Number Theory, Version 3.06 (2014).

\bibitem{Neu99}
 J. Neukirch, Algebraic Number Theory, 1st edition, Springer (1999).

\bibitem{RS05}
P. Martins Rodrigues and J. Sousa Ramos, Bowen-Franks groups as conjugacy invariants for $\T^{n}$ automorphisms, Aequationes Mathematicae 69 (2005) 231-249.

\bibitem{Tau49} O. Taussky, On a theorem of Latimer and MacDuffee, Canadian J. Math. 1 (1949),
300-302.

\bibitem{Tau62}
O. Taussky, Ideal Matrices I., Archives of Mathematics 13, 275--282 (1962).

\bibitem{Tau63}
O. Taussky, Ideal Matrices II., Math. Annalen 150, 218--225 (1963).

\bibitem{Tau80}
O. Taussky, A note on Local-Global principle for similarity of matrices, Linear Algebra and Its Applications 30, 241--245 (1980).

\bibitem{Wan08}
S. Wang, On a certain Triple System, Elliptic Curves and Gauss Theory of Quadratic Forms, PhD Thesis (2008).

\bibitem{Zil11}
J. Zyl, On the Latimer-MacDuffee theorem for polynomials over finite fields (doctoral dissertation), Rhodes University (2011).

\end{thebibliography}
\end{document}